\title{Rational equivariant rigidity}
\author{David Barnes}
\author{Constanze Roitzheim}
\address{C. Roitzheim \\ University of Glasgow \\ Department of Mathematics\\ University Gardens\\ Glasgow G12 8QW,
UK}
\address{D. Barnes\\ Department of Pure Mathematics\\
University of Sheffield \\ Hicks Building\\ Hounsfield Road\\
Sheffield S3 7RH, UK}
\thanks{The first author was supported by EPSRC grant EP/H026681/1, the second author by EPSRC grant EP/G051348/1. }
\date{$6^\text{th}$ January 2012}
\DeclareMathOperator{\Ho}{Ho}
\DeclareMathOperator{\Hom}{Hom}
\DeclareMathOperator{\Sp}{Sp}
\DeclareMathOperator{\Ch}{Ch}
\DeclareMathOperator{\h}{H}
\DeclareMathOperator{\classify}{B}
\DeclareMathOperator{\Dclassify}{DB}
\DeclareMathOperator{\E}{E}
\DeclareMathOperator{\DE}{DE}
\DeclareMathOperator{\leftmod}{mod--}
\newcommand{\GMQ}{G \Sp_\mathbb{Q}}
\newcommand{\TMQ}{\mathbb{T} \Sp_\mathbb{Q}}
\newtheorem{theorem}{Theorem}[section]
\newtheorem{proposition}[theorem]{Proposition}
\newtheorem{corollary}[theorem]{Corollary}
\newtheorem{lemma}[theorem]{Lemma}
\newtheorem{definition}[theorem]{Definition}
\newtheorem{ex}[theorem]{Example}
\newtheorem{thm}{Theorem}
\begin{document}
\begin{abstract}
We prove that if $G$ is $S^1$ or a profinite group,
then the all of the homotopical information of the 
category of rational $G$-spectra
is captured by triangulated structure of the
rational $G$-equivariant stable homotopy category. 
That is, for $G$ profinite or $S^1$, the 
rational $G$-equivariant stable homotopy category is rigid.
For the case of profinite groups this rigidity comes from 
an intrinsic formality statement, so we carefully relate 
the notion of intrinsic formality of a differential graded
algebra to rigidity. 
\end{abstract}
\maketitle

\section*{Introduction}

Quillen equivalences between stable model categories give rise to triangulated equivalences of their homotopy categories. The converse is not necessarily true - there are numerous examples of model categories that have equivalent homotopy categories but completely different homotopical behaviour. One example, see \cite[2.6]{SchShi02}, is module spectra over the Morava K-theories $K(n)$ and differential graded $K(n)_*$-modules. Their homotopy categories are equivalent triangulated categories, yet they cannot be Quillen equivalent as they have different mapping spaces.

As this converse statement is a very strong property, it is of great interest to find stable model categories $\mathcal{C}$ whose homotopical information is entirely determined by the triangulated structure of the homotopy category $\Ho(\mathcal{C})$. Such homotopy categories are called \emph{rigid}.

The first major result was found by Stefan Schwede who proved the rigidity of the stable homotopy category $\Ho(\Sp)$: any stable model category whose homotopy category is triangulated equivalent to $\Ho(\Sp)$ is Quillen equivalent to the model category of spectra \cite{Sch07}. To investigate further into the internal structure of the stable homotopy category, Bousfield localisations of the stable homotopy category have subsequently been considered. The second author showed in \cite{Roi07} that the $K$-local stable homotopy category at the prime 2 is rigid. Astonishingly, this is not true for odd primes, as a counterexample by Jens Franke shows in \cite{Fra96}, see also \cite{Roi08}.

The main focus of the proofs of the above results is the respective sphere spectra and their endomorphisms.  For both the stable homotopy category and its $p$-local $K$-theory localisation, the sphere is a \emph{compact generator}, meaning that it ``generates'' the entire homotopy category under exact triangles and coproducts. Studying the endomorphisms of a generator is essentially Morita theory. The idea is that all homotopical information of a model category can be deduced from a certain endomorphism ring object of its compact generators \cite{SchShi03}. In the case of a category with one compact generator, this endomorphism ring object is a symmetric ring spectrum.

The cases mentioned above all monogenic, that is, they have a single compact generator (the sphere spectrum). In this paper we are working with homotopy categories that have multiple generators. Our examples are taken from equivariant stable homotopy theory. This area of mathematics is of great general interest because of the prevalence of group actions in mathematics, so it makes good sense to use this area as our source of examples of non-monogenic rigidity. 

Specifically, the examples we study are model categories of rational $G$-equivariant spectra $\GMQ$, where $G$ is either a finite group, a profinite group or the circle group $S^1$. Recall that a profinite group is an inverse limit of an inverse system of finite groups, with the $p$-adic numbers being the canonical example. Any finite group is, of course, profinite, but whenever we talk of a profinite group we assume that the group is infinite. 

In the case of a finite group the category $\GMQ$ has been extensively studied by Greenlees, May and the first author \cite{Bar09}. For a profinite group the non-rationalised category is introduced and examined in \cite{fauskeqpro}. 
The first author studies the rationalised category in \cite{barnespadic}, 
with emphasis on the case of the $p$-adic integers. The case of $S^1$
has been studied in great detail by Greenlees and Shipley, \cite{GS11}.
The goal of this paper is to prove the following for $G$ finite, profinite or $S^1$.

\begin{thm}[\textbf{Rational $G$-equivariant Rigidity Theorem}] Let $\mathcal{C}$ be a proper, cofibrantly generated, simplicial, stable model category and let
\[
\Phi: \Ho(\GMQ) \longrightarrow \Ho(\mathcal{C})
\]
be an equivalence of triangulated categories. Then $\mathcal{C}$ and $\GMQ$ are Quillen equivalent.
\end{thm}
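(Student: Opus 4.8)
The plan is to run the standard rigidity strategy of Schwede and the second author, but for a triangulated category with a \emph{set} of compact generators rather than a single one, and to conclude by feeding the resulting endomorphism object into an intrinsic formality statement. Recall that $\Ho(\GMQ)$ has a set $\mathcal{G}=\{G_i\}$ of compact generators given by cofibrant--fibrant replacements of the suspension spectra of the orbits $G/H_+$, where $H$ ranges over the relevant family of subgroups (all subgroups for $G$ finite, the open subgroups for $G$ profinite, the closed subgroups for $G=S^1$). Since $\Phi$ is an equivalence of triangulated categories it preserves compactness and generation, so choosing cofibrant--fibrant objects $X_i\in\mathcal{C}$ with $\Phi(G_i)\cong X_i$ in $\Ho(\mathcal{C})$ gives a set of compact generators of $\mathcal{C}$. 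As $\mathcal{C}$ is proper, cofibrantly generated, simplicial and stable, the Schwede--Shipley Morita theory applies on both sides, yielding Quillen equivalences $\GMQ\simeq_Q\leftmod\mathcal{E}_{G}$ and $\mathcal{C}\simeq_Q\leftmod\mathcal{E}_{\mathcal{C}}$, where $\mathcal{E}_{G}$ and $\mathcal{E}_{\mathcal{C}}$ are the spectral endomorphism ringoids of $\mathcal{G}$ and $\{X_i\}$.

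Because $\Phi$ commutes with suspension and with composition, it induces an isomorphism of graded $\mathbb{Q}$-categories between the graded morphism objects $[\mathcal{G},\mathcal{G}]_*$ computed in $\Ho(\GMQ)$ and $[\{X_i\},\{X_i\}]_*$ computed in $\Ho(\mathcal{C})$; call this common graded $\mathbb{Q}$-category $E_*$. In particular $\pi_*\mathcal{E}_{\mathcal{C}}\cong E_*$ is rational, so $\mathcal{E}_{\mathcal{C}}$ is a rational spectral ringoid (the rational structure on $\mathcal{C}$ is thus deduced, not assumed). Applying Shipley's algebraicisation of $H\mathbb{Q}$-algebras, with several objects, produces DG-categories $A_{G}$ and $A_{\mathcal{C}}$ over $\mathbb{Q}$ with $\leftmod\mathcal{E}_{G}\simeq_Q\leftmod A_{G}$ and $\leftmod\mathcal{E}_{\mathcal{C}}\simeq_Q\leftmod A_{\mathcal{C}}$, and with $H_*A_{G}\cong E_*\cong H_*A_{\mathcal{C}}$.

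It now suffices to show that $A_{G}\simeq A_{\mathcal{C}}$, for which I would prove that $E_*$ is \emph{intrinsically formal}: any DG-category over $\mathbb{Q}$ with homology $E_*$ is quasi-isomorphic to $E_*$ (with zero differential). Granting this, $A_{\mathcal{C}}\simeq E_*\simeq A_{G}$ (the latter also following directly from the known algebraic models for $\GMQ$ of \cite{Bar09}, \cite{barnespadic}, \cite{GS11}), and chaining the Quillen equivalences gives $\mathcal{C}\simeq_Q\GMQ$. For $G$ finite, $E_*$ is concentrated in degree $0$ and, by Maschke and Wedderburn, is a finite product of matrix algebras over division $\mathbb{Q}$-algebras, so it is trivially intrinsically formal. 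For $G$ profinite, $E_*$ is again concentrated in a single degree, and a DG-category whose homology lives in one degree is quasi-isomorphic to that homology (truncate $\tau_{\ge0}$ then $\tau_{\le0}$); the content here is the bookkeeping for the countable set of objects and the inverse-limit structure of $E_0$. For $G=S^1$, $E_*$ is no longer concentrated in one degree, and here intrinsic formality is proved by the usual obstruction-theoretic argument: the obstructions to uniqueness of an $A_\infty$-structure on $E_*$ lie in the Hochschild cohomology groups $HH^{s,t}(E_*)$ with $t-s=-1$, and these vanish because the relevant global/injective dimension is small (the Greenlees--Shipley model has cohomological dimension $1$).

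The main obstacle is the intrinsic formality of $E_*$ in the $S^1$ case: one must identify $E_*$ explicitly in terms of the Greenlees--Shipley standard model and carry out the Hochschild cohomology vanishing, which is where the only genuine computation lies. A secondary, pervasive difficulty is handling the infinitely many generators uniformly in the profinite and $S^1$ cases --- checking that the generating family is genuinely a set, that Schwede--Shipley Morita theory and Shipley's algebraicisation apply verbatim to ringoids with infinitely many objects, and that the (merely inherited) rational structure on $\mathcal{C}$ suffices throughout.
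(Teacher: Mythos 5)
Your handling of the finite and profinite cases is essentially the paper's own argument: Schwede--Shipley Morita theory on both sides, the Greenlees--May (finite) and Segal--tom Dieck (profinite) computations showing that the graded endomorphisms of the orbit generators are rationally concentrated in degree zero, and then a formality-type comparison identifying both module categories with modules over the degree-zero homology $\h \underline{A}$. The paper phrases this with spectral categories and the Schwede--Shipley Eilenberg--MacLane theorem rather than passing to $\Ch(\mathbb{Q})$ first, but this is the same ``homology concentrated in degree zero is intrinsically formal'' mechanism you invoke, so that part of your plan is sound.

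The $S^1$ case is where your proposal breaks down. The graded endomorphism category $E_*$ of the generators is \emph{not} intrinsically formal; in fact the endomorphism dga with many objects is not even formal. Shipley's computation (Proposition \ref{prop:homology}) exhibits non-trivial Massey products $\left< [f_0^H], [m_1^H],\dots,[m_1^H],[\tilde{g}_1^H]\right> = \{[-i^H_{2n+1}]\}$, and a dga with a non-trivial Massey product cannot be quasi-isomorphic to its homology with zero differential. Correspondingly, the Hochschild obstruction groups you propose to show vanish do not vanish: the uniqueness obstructions are realised precisely by these Massey products. The intrinsic formality that does hold in the Greenlees--Shipley picture concerns the diagram of \emph{commutative} dgas $\mathbb{Q} \to \mathcal{E}^{-1}\mathcal{O}_\mathcal{F} \leftarrow \mathcal{O}_\mathcal{F}$, which is not the endomorphism object of the orbit generators, depends essentially on commutativity, and is not visible from a bare triangulated equivalence, so it cannot be substituted into your argument. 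What works instead, and is what the paper does following Shipley, is an explicit comparison: replace $\mathcal{E}(\mathcal{X})$ by a Postnikov-modified $(-1)$-connected cover $\mathcal{E}_X$ and construct a quasi-isomorphism from the small dga $\mathcal{S}$ (which has non-zero differential) by choosing cycle representatives and verifying the relations using the homology, the products and the Massey/Toda brackets, all of which are preserved by the triangulated equivalence. So the $S^1$ half of your proof needs this explicit argument with $\mathcal{S}$; the intrinsic formality route you outline would fail at the very computation you identify as the main obstacle.
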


The homotopy category $\Ho(\GMQ)$ is not monogenic, a set of compact generators is, in the finite case, given by $\mathcal{G}_{top} = \{ \Sigma^\infty G/H_+ \}$, the suspension spectra of the homogeneous spaces $G/H$ for $H$ a subgroup of $G$. In the profinite case the generators are   $\mathcal{G}_{top} = \{ \Sigma^\infty G/H_+ \}$ as $H$ runs over the open subgroups of $G$. 
For $S^1$ the generators are $\mathcal{G}_{top} = \{ \Sigma^\infty S^1/H_+ \}$ as $H$ runs over the closed subgroups of $G$.
Hence, instead of studying an endomorphism ring object, we consider a ``ring spectrum with several objects'', which is a small spectral category. Via the above equivalence $\Phi$, $\mathcal{G}_{top}$ also provides a set of compact generators $\mathcal{X}=\Phi(\mathcal{E}_{top})$ for the homotopy category of $\mathcal{C}$, from which we can form its endomorphism category $\mathcal{E}(\mathcal{X})$. Generally, a triangulated equivalence on homotopy category would not be sufficient to extract enough information from $\mathcal{E}(\mathcal{X})$. However, in our case computations by Greenlees and May (for finite groups), the first author (for profinite groups) and Greenlees and Shipley (for $S^1$) allow us to construct a Quillen equivalence.

This theorem is particularly notable as it provides an example of rigidity in the case of multiple generators rather than just a monogenic homotopy category. We note that in the finite case, the coproduct of all the generators is also a compact generator, so technically speaking, $\GMQ$ can be thought of as monogenic in the finite case. In the profinite setting (where we have assumed the group to be infinite) or for $S^1$, there are countably many generators and no finite subset will suffice.  Furthermore, the coproduct of this infinite collection of generators will not be compact, so these model categories cannot be monogenic.

\subsection*{Organisation}
In Section \ref{modelcategories} we establish some notations and conventions before discussing the notions of generators and compactness.

Section \ref{Moritatheory} provides a summary of Schwede's and Shipley's Morita theory result which relates model categories to categories of modules over an endomorphism category of generators. More precisely, assume that $\mathcal{D}$ is a model category which satisfies some further minor technical assumptions and which has a set of generators $\mathcal{X}$. Then one can define the \emph{endomorphism category} $\mathcal{E}(\mathcal{X})$ and the model category of modules over it. Schwede and Shipley then give a sequence of Quillen equivalences
\[
\mathcal{D} \simeq_Q \leftmod \mathcal{E}(\mathcal{X}).
\]

In Section \ref{Gspectra} we recover some definitions and properties about rational $G$-spectra and describe its endomorphism category $\mathcal{E}(\mathcal{G}_{top}).$

In Section \ref{Quillenequivalence} we prove our rigidity theorem for finite and profinite groups. 
We begin by assuming that $\mathcal{C}$ is a well behaved model category with a triangulated equivalence between $\Ho(\mathcal{C})$ and $\Ho(\GMQ)$. Then when $G$ is a finite or profinite group, 
the previous three sections provide enough information to produce a Quillen equivalence between the given model category $\mathcal{C}$ and $\GMQ$. In detail, we use the Morita theory of Section \ref{Moritatheory} to obtain a Quillen equivalence
\[
\leftmod \mathcal{E}(\mathcal{G}_{top})
 \,\,\raisebox{-0.1\height}{$\overrightarrow{\longleftarrow}$}\,\,
\GMQ
\]
and a zig-zag of Quillen equivalences
between $\mathcal{C}$ and 
$\leftmod \mathcal{E}(\mathcal{X})$.
We then use the computations of Section \ref{Gspectra} to produce a series of Quillen equivalences relating
$\leftmod \mathcal{E}(\mathcal{G}_{top})$ and $\leftmod \mathcal{E}(\mathcal{X})$.

With the profinite and finite case complete, we turn to the rigidity statement for the circle group $S^1$ in Section \ref{sec:circlecase}. 

In Section \ref{sec:formality}, 
we introduce the notion of formality and give some examples. We then 
explain how our rigidity statements for profinite and finite groups
fit into this framework, but the $S^1$ case does not.

\section{Stable model categories and generators}\label{modelcategories}

We assume that the reader is familiar with the basics of Quillen model categories. We provide only a brief summary of the main notions in order to establish notation and other conventions.

A model category $\mathcal{C}$ is a category with three distinguished classes of morphisms denoted \emph{weak equivalences} $\xymatrix{\ar[r]^{\sim} & {} }$, \emph{fibrations} $\xymatrix{\ar@{>>}[r] &{} }$  and \emph{cofibrations}  $\xymatrix{  \ar@{>->}[r] & {} }$ satisfying some strong but rather natural axioms. A good reference is \cite{DwySpa95}. The main purpose of a model structure is enabling us to define a reasonable notion of homotopy between morphisms. One can then form the homotopy category $\Ho(\mathcal{C})$ of a model category $\mathcal{C}$ using homotopy classes of morphisms.

For a pointed model category $\mathcal{C}$ one can define a \emph{suspension functor} and \emph{loop functor} as follows. Let $X$ be an object in $\mathcal{C}$, without loss of generality let $X$ be fibrant and cofibrant. We then choose a factorisation
$$\xymatrix{ X \ar@{>->}[r] & C \ar[r]^{\sim}  & \ast }$$ of the map $X \longrightarrow *$. The suspension $\Sigma X$ is now defined as the pushout of the diagram $$\xymatrix{ \ast  & \ar[l]\ar@{>->}[r] X & C}.$$ The loop functor $\Omega$ is defined dually. Suspension and loop functors form an adjunction
\[
\Sigma: \Ho(\mathcal{C}) \,\,\raisebox{-0.1\height}{$\overrightarrow{\longleftarrow}$}\,\, \Ho(\mathcal{C}): \Omega .
\]
Note that when we write down an adjunction of functors the top arrow always denotes the left adjoint.

In the case of pointed topological spaces this recovers the usual suspension and loop functors. For the derived category of an abelian category the suspension functor is the shift functor of degree $+1$ and the loop functor the shift functor of degree $-1$. So in the latter case suspension and loop functors are inverse equivalences of categories, which is not the case for topological spaces.

\begin{definition}
A pointed model category $\mathcal{C}$ is called \emph{stable} if
\[
\Sigma: \Ho(\mathcal{C}) \,\,\raisebox{-0.1\height}{$\overrightarrow{\longleftarrow}$}\,\, \Ho(\mathcal{C}): \Omega
\]
are inverse equivalences of categories.
\end{definition}

One reason why stable model categories are of interest is because their homotopy categories are triangulated, which gives us a wealth of additional structure to make use of. Examples of stable model categories are chain complexes of modules over a ring $R$ with either the projective or injective model structure \cite[2.3]{Hov99} or symmetric spectra $\Sp$ in the sense of \cite{HSS}.

We also need to consider functors respecting the model structures on categories.

\begin{definition}
Let $F: \mathcal{C} \,\,\raisebox{-0.1\height}{$\overrightarrow{\longleftarrow}$}\,\, \mathcal{D}: G$ be an adjoint functor pair. Then $(F,G)$ is called a \emph{Quillen adjunction} if $F$ preserves cofibrations and acyclic cofibrations, or equivalently, $G$ preserves fibrations and acyclic fibrations.
\end{definition}

Note that a Quillen functor pair induces an adjunction $$LF: \Ho(\mathcal{C}) \,\,\raisebox{-0.1\height}{$\overrightarrow{\longleftarrow}$}\,\, \Ho(\mathcal{D}): RG$$ \cite[1.3.10]{Hov99}. The functors $LF$ and $RG$ are called the \emph{derived functors} of the Quillen functors $F$ and $G$. If the adjunction $(LF, RG)$ provides an equivalence of categories, then $(F,G)$ is called \emph{Quillen equivalence}. But Quillen equivalences do not only induce equivalences of homotopy categories, they also give rise to equivalences on all `higher homotopy constructions' on $\mathcal{C}$ and $\mathcal{D}$. To summarise, Quillen equivalent model categories have the same homotopy theory.

In the case of $\mathcal{C}$ and $\mathcal{D}$ being stable model categories, the derived functors $LF$ and $RG$ are \emph{exact} functors. This means that they respect the triangulated structures. For a triangulated category $\mathcal{T}$, we denote the morphisms in $\mathcal{T}$ by $[-,-]^{\mathcal{T}}_*$. In the case of $\mathcal{T}=\Ho(\mathcal{C})$, we abbreviate this to $[-,-]^{\mathcal{C}}_*$.

\begin{definition}
Let $\mathcal{T}$ be a triangulated category. A set $\mathcal{X} \subseteq \mathcal{T}$ is called \emph{a set of generators} if it detects isomorphisms, i.e. a morphism $f:A \longrightarrow B$ in $\mathcal{T}$ is an isomorphism if and only if
\[
f_*: [X,A]^{\mathcal{T}}_* \longrightarrow [X,B]^{\mathcal{T}}_*
\]
is an isomorphism for all $X \in \mathcal{X}$.
\end{definition}

\begin{definition}
If $\mathcal{T}$ is a triangulated category that has infinite coproducts, then
an object $Y \in \mathcal{T}$ is called \emph{compact} (or \emph{small}) if $[Y,-]^{\mathcal{T}}_*$ commutes with coproducts.
\end{definition}

The importance of those definitions can be seen in the following: if $\mathcal{T}$ has a set of compact generators $\mathcal{X}$, then any triangulated subcategory of $\mathcal{T}$ that contains $\mathcal{X}$ that is closed under coproducts must already be $\mathcal{T}$ itself, \cite[4.2]{Kel94}. Note that if $\Phi: \mathcal{T} \longrightarrow \mathcal{T}'$ is an equivalence of triangulated categories and $\mathcal{X} \subseteq \mathcal{T}$ a set of generators, it is immediate that $\Phi(\mathcal{X})$ is a set of generators in $\mathcal{T}'$.

\medskip
Examples of compact generators include the following.
\begin{itemize}
\item The sphere spectrum $S^0$ is a compact generator for the stable homotopy category $\Ho(\Sp)$.
\item Consider a smashing Bousfield localisation with respect to a homology theory $E_*$. Then the $E$-local sphere $L_E S^0$ is a compact generator of the $E$-local stable homotopy category $\Ho(L_E \Sp)$. However, if the localisation is not smashing, then $L_E S^0$ is a generator but not compact \cite[3.5.2]{HovPalStr97}.
\item Let $R$ be a commutative ring. Then the free $R$-module of rank one is a compact generator of the derived category $\mathcal{D}(R\mbox{-mod})$.
\end{itemize}

For a more detailed list of examples, \cite{SchShi03} is an excellent source, which also gives examples of non-monogenic triangulated categories such as $G$-spectra. The category of $G$-spectra will be discussed in detail in Section \ref{Gspectra}.

\section{Morita theory for stable model categories}\label{Moritatheory}

We are going to summarize some results and techniques of \cite{SchShi03} in this section. Schwede and Shipley show that, given a few minor technical assumptions, any stable model category is Quillen equivalent to a category of modules over an endomorphism ring object. In the case of a model category with a single generator $X$, this endomorphism object is a symmetric ring spectrum. However, in the case of a stable model category with a set of several generators such as rational $G$-spectra, the endomorphism object is a small category enriched over ring spectra, or a ``ring spectrum with several objects''.

We are going to assume that our stable model category $\mathcal{C}$ is a simplicial model category \cite[4.3]{Hov99} which is also proper and cofibrantly generated. As mentioned, being proper and cofibrantly generated are only minor technical assumptions which hold in most examples of reasonable model categories and being simplicial is less of a restriction than it may seem, via \cite{RezSchShi01} or \cite{Dug06}. In the latter reference, Dugger shows that any stable model category which is also ``presentable'' is Quillen equivalent to a spectral model category (defined below). These conditions (cofibrantly generated, proper and simplicial) will also be the assumptions of our main theorem in Section \ref{Quillenequivalence} later.

A \emph{spectral category} is a category $\mathcal{O}$ enriched, tensored and cotensored over symmetric spectra, see e.g. \cite[3.3.1]{SchShi03} or \cite[4.1.6]{Hov99}. A \emph{spectral model category} is a model category which is also a spectral category and further, the spectral structure is compatible with the model structure via the axiom (SP). The axiom (SP) is analogous to the axiom (SM7) that makes a simplicial category into a simplicial model category, see \cite[3.5.1]{SchShi03} or \cite[4.2.18]{Hov99}. A \emph{module over the spectral category $\mathcal{O}$} is a spectral functor
\[
M: \mathcal{O}^{op} \longrightarrow \Sp.
\]
A spectral functor consists of a symmetric spectrum $M(X)$ for each $X \in \mathcal{O}$ together with maps
\[
M(X) \wedge \mathcal{O}(Y,X) \longrightarrow M(Y)
\]
satisfying associativity and unit conditions, see \cite[3.3.1]{SchShi03}. The category $\leftmod\mathcal{O}$ of modules over the spectral category $\mathcal{O}$ can be given a model structure such that the weak equivalences are element-wise stable equivalences of symmetric spectra and fibrations are element-wise stable fibrations \cite[Theorem A.1.1]{SchShi03}.

To define the endomorphism category of a cofibrantly generated, simplicial, proper stable model category $\mathcal{C}$, we first have to replace $\mathcal{C}$ by a spectral category. In \cite[3.6]{SchShi03} Schwede and Shipley describe the category $\Sp(\mathcal{C})$ of symmetric spectra over $\mathcal{C}$, i.e. symmetric spectra with values in $\mathcal{C}$ rather than pointed simplicial sets. Theorem 3.8.2 of \cite{SchShi03} states how $\mathcal{C}$ can be replaced by $\Sp(\mathcal{C})$.

\begin{theorem}[Schwede-Shipley]\label{spectralreplacement}
The category $\Sp(\mathcal{C})$ can be given a model structure, the stable model structure, which makes $\Sp(\mathcal{C})$ into a spectral model category that is Quillen equivalent to $\mathcal{C}$ via the adjunction
\[
\Sigma^\infty: \mathcal{C}  \,\,\raisebox{-0.1\height}{$\overrightarrow{\longleftarrow}$}\,\, \Sp(\mathcal{C}): Ev_0.
\]
\end{theorem}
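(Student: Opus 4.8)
The plan is to construct the stable model structure on $\Sp(\mathcal{C})$ in two stages and then extract the Quillen equivalence from the hypothesis that $\mathcal{C}$ is already stable. First I would equip $\Sp(\mathcal{C})$ with the \emph{projective level model structure}, in which $f \colon X \to Y$ is a weak equivalence (respectively a fibration) if and only if each $f_n \colon X_n \to Y_n$ is one in $\mathcal{C}$. Since $\mathcal{C}$ is cofibrantly generated, this structure is cofibrantly generated with generating (acyclic) cofibrations $F_n(i)$, where $i$ runs over the generating (acyclic) cofibrations of $\mathcal{C}$ and $F_n$ is the left adjoint to evaluation at level $n$; the lifting and smallness axioms follow by adjunction from the corresponding facts in $\mathcal{C}$. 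This structure inherits left properness from $\mathcal{C}$ levelwise.

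Next I would pass to the \emph{stable model structure} as the left Bousfield localisation of the projective level structure at the set of stabilisation maps $\lambda_n^A \colon F_{n+1}(A \wedge S^1) \to F_n(A)$, with $A$ ranging over the domains and codomains of the generating cofibrations of $\mathcal{C}$ (the smash $A \wedge S^1$ makes sense because $\mathcal{C}$ is simplicial). Existence of the localisation follows from the general machinery of left Bousfield localisation, which applies because the level structure is left proper and cofibrantly generated with the requisite smallness. The local objects are exactly the levelwise fibrant $\Omega$-spectra, i.e. those $Y$ for which the adjoint structure maps $Y_n \to \Omega Y_{n+1}$ are weak equivalences in $\mathcal{C}$, and, as in any left Bousfield localisation, cofibrations are unchanged and the stable acyclic fibrations coincide with the level acyclic fibrations. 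For the spectral structure I would define a smash product $\Sp \wedge \Sp(\mathcal{C}) \to \Sp(\mathcal{C})$, together with the matching cotensor and enrichment, by combining the simplicial tensoring on $\mathcal{C}$ with the symmetric-group bookkeeping, exactly as for symmetric spectra of simplicial sets; verifying axiom (SP) reduces, by passing to generating cofibrations, to the (SM7) axiom of the simplicial model category $\mathcal{C}$ together with the corresponding statement for $\Sp$. Finally, the adjunction $\Sigma^\infty = F_0 \dashv Ev_0$ is Quillen because $Ev_0$ visibly preserves level fibrations and level acyclic fibrations, and stable fibrations (respectively stable acyclic fibrations) are in particular level fibrations (respectively level acyclic fibrations).

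The hard part is proving that $(\Sigma^\infty, Ev_0)$ is a Quillen equivalence, and this is precisely where stability of $\mathcal{C}$ enters. I would show that for cofibrant $A \in \mathcal{C}$ the derived unit $A \to Ev_0(R\Sigma^\infty A)$ is a weak equivalence, and that for stably fibrant $Y$ the derived counit $\Sigma^\infty((Ev_0 Y)^{cof}) \to Y$ is a stable equivalence. For the counit, $Y$ is a levelwise fibrant $\Omega$-spectrum, so $Y_n \simeq \Omega Y_{n+1}$ in $\Ho(\mathcal{C})$; combined with the fact that $\Omega$ is an \emph{equivalence} on $\Ho(\mathcal{C})$, this forces each $[Y_n]$ to be determined by $[Y_0]$, and one concludes that a map of $\Omega$-spectra which is an equivalence on level $0$ is a level equivalence, hence a stable equivalence. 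For the unit, the same stability input shows that stable fibrant replacement of $\Sigma^\infty A$ does not change the homotopy type of level $0$, since $A \simeq \Omega^n \Sigma^n A$ already holds in $\Ho(\mathcal{C})$. The main obstacle is making these two ``an $\Omega$-spectrum over a stable category is detected on level $0$'' arguments precise, in particular controlling the interaction between the (rather inexplicit) stable fibrant replacement coming from the Bousfield localisation and the evaluation functor $Ev_0$; once that is in place, the equivalence of derived functors is formal.
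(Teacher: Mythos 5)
The paper does not prove this statement at all: it is quoted verbatim as Theorem 3.8.2 of \cite{SchShi03}, whose proof in turn rests on the stabilisation machinery of Hovey's ``Spectra and symmetric spectra in general model categories''. So the relevant comparison is with that proof, and your outline follows the same broad strategy (level model structure, then stabilise, then use stability of $\mathcal{C}$ for the Quillen equivalence). However, two of your steps are genuinely gapped rather than merely compressed. First, you invoke ``the general machinery of left Bousfield localisation'' on the strength of the level structure being left proper and cofibrantly generated. That machinery is not available under those hypotheses: Hirschhorn-style localisation needs a cellular (or combinatorial, left proper) model category, and the whole point of the Schwede--Shipley/Hovey treatment is to get around this. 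They do not localise abstractly; they define stable equivalences concretely (via injective $\Omega$-spectra, following the pattern of symmetric spectra of simplicial sets, resp.\ via the $\lambda$-maps in Hovey's setting) and then verify the model category axioms by hand, which is where properness and cofibrant generation of $\mathcal{C}$ are actually used.

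Second, the Quillen equivalence itself is exactly the part you defer as ``the main obstacle'', and it does not become formal once the model structure exists. Your counit argument (a map of levelwise fibrant $\Omega$-spectra that is an equivalence at level $0$ is a level equivalence, by stability of $\mathcal{C}$) is fine as far as it goes, but its source $\Sigma^\infty\bigl((Ev_0 Y)^{cof}\bigr)$ is not an $\Omega$-spectrum, so applying it requires precisely the control over the stably fibrant replacement of a suspension spectrum that you have not supplied; the same issue underlies your unit argument, where ``$A \simeq \Omega^n\Sigma^n A$ in $\Ho(\mathcal{C})$'' does not by itself identify $Ev_0$ of the stably fibrant replacement of $\Sigma^\infty A$. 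For symmetric spectra this is a known genuine difficulty: stable equivalences are not detected levelwise and fibrant replacement is not the naive spectrification, which is why Hovey proves the analogue of this equivalence (tensoring with $K=S^1$ being a Quillen equivalence implies $\Sigma^\infty \dashv Ev_0$ is one) through a careful comparison, and why Schwede and Shipley cite that argument rather than the sketch you give. In addition, your identification of the local objects with the levelwise fibrant $\Omega$-spectra is itself a theorem in this generality, not a definition. So the approach is the right one, but the two central technical points are asserted rather than proved.
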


Now let $\mathcal{D}$ be a spectral model category with a set of compact generators $\mathcal{X}$. We define the \emph{endomorphism category} $\mathcal{E}(\mathcal{X})$ as having objects $X \in \mathcal{X}$ and morphisms $$\mathcal{E}(\mathcal{X})(X_1,X_2)=
\Hom_{\mathcal{D}}(X_1,X_2).$$ Here $\Hom_{\mathcal{D}}(-,-)$ denotes the homomorphism spectrum. This is an object in the category of symmetric spectra and comes as a part of the spectral enrichment of $\mathcal{D}$. The category $\mathcal{E}(\mathcal{X})$ is obviously a small spectral category. In the case of $\mathcal{X}=\{X \}$, $\mathcal{E}(\mathcal{X})(X,X)$ is a symmetric ring spectrum, the \emph{endomorphism ring spectrum of $X$}.

Without loss of generality we assume our generators to be both fibrant and cofibrant.  One can now define an adjunction
\[
-\wedge_{\mathcal{E}(\mathcal{X})} \mathcal{X} :
\leftmod\mathcal{E}(\mathcal{X})
\,\,\raisebox{-0.1\height}{$\overrightarrow{\longleftarrow}$}\,\,
\mathcal{D}   :
\Hom(\mathcal{X},-)
\]
where the right adjoint is given by $\Hom(\mathcal{X},Y)(X)=\Hom_{\mathcal{D}}(X,Y)$, \cite[3.9.1]{SchShi03}. The left adjoint is given via
\[
M \wedge_{\mathcal{E}(\mathcal{X})} \mathcal{X} = :eq \Big( \bigvee\limits_{X_1,X_2 \in \mathcal{X}} M(X_2) \wedge \mathcal{E}(\mathcal{X})(X_1,X_2)   \wedge X_1 \rightrightarrows \bigvee\limits_{X\in \mathcal{X}} M(X) \wedge X \Big).
\]
One map in the coequaliser is induced by the module structure, the other one comes from the evaluation map$$\mathcal{E}(\mathcal{X})(X_1,X_2)   \wedge X_1 \longrightarrow X_2.$$ Schwede and Shipley then continue to prove \cite[Theorem 3.9.3]{SchShi03} which says that in the case of all generators being compact, the above adjunction forms a Quillen equivalence. Combining this with Theorem \ref{spectralreplacement} one arrives at the following.

\begin{theorem}[Schwede-Shipley]\label{moritatheorem}
Let $\mathcal{C}$ be a proper, cofibrantly generated, simplicial,  stable model category with a set of compact generators $\mathcal{X}$. Then there is a zig-zag of simplicial Quillen equivalences
\[
\mathcal{C}
\simeq_Q
\leftmod\mathcal{E}(\mathcal{X}).
\]
\end{theorem}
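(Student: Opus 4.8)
The plan is to obtain the zig-zag by splicing together two simplicial Quillen equivalences: the spectral replacement of Theorem \ref{spectralreplacement}, and the module recognition equivalence recalled above. First I would apply Theorem \ref{spectralreplacement} to replace $\mathcal{C}$ by the spectral model category $\Sp(\mathcal{C})$, via the simplicial Quillen equivalence $(\Sigma^\infty, Ev_0)$. Since a Quillen equivalence between stable model categories induces an exact equivalence of the associated triangulated homotopy categories, and such an equivalence carries a set of compact generators to a set of compact generators (as observed in Section \ref{modelcategories}), the objects $L\Sigma^\infty X$ for $X \in \mathcal{X}$ form a set of compact generators of $\Ho(\Sp(\mathcal{C}))$. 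Replacing each by a cofibrant-fibrant object of $\Sp(\mathcal{C})$ --- which, as remarked above, we may always do --- produces a set $\mathcal{X}'$ of bifibrant compact generators of the spectral model category $\Sp(\mathcal{C})$, with endomorphism category weakly equivalent to $\mathcal{E}(\mathcal{X})$.

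Next I would apply the module comparison to $\mathcal{D} = \Sp(\mathcal{C})$ with generating set $\mathcal{X}'$. This gives the simplicial Quillen adjunction
\[
-\wedge_{\mathcal{E}(\mathcal{X}')} \mathcal{X}' : \leftmod\mathcal{E}(\mathcal{X}') \,\,\raisebox{-0.1\height}{$\overrightarrow{\longleftarrow}$}\,\, \Sp(\mathcal{C}) : \Hom(\mathcal{X}',-)
\]
of \cite[3.9.1]{SchShi03}, and because the objects of $\mathcal{X}'$ are compact, Theorem 3.9.3 of \cite{SchShi03} upgrades it to a Quillen equivalence. Composing with the first equivalence yields the desired zig-zag of simplicial Quillen equivalences $\mathcal{C} \simeq_Q \Sp(\mathcal{C}) \simeq_Q \leftmod\mathcal{E}(\mathcal{X})$.

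Most of what remains is bookkeeping already carried out in \cite{SchShi03}: that $\Sp(\mathcal{C})$, the module categories $\leftmod\mathcal{O}$, and all the functors involved may be taken simplicial; and that passing to bifibrant representatives of the generators does not change the weak homotopy type of the endomorphism category, so that $\mathcal{E}(\mathcal{X}')$ and $\mathcal{E}(\mathcal{X})$ have Quillen equivalent module categories. The one point where the hypotheses genuinely bite --- and the main obstacle, such as it is --- is the compactness of $\mathcal{X}$: this is precisely what makes $-\wedge_{\mathcal{E}(\mathcal{X})}\mathcal{X}$ an equivalence onto all of $\Sp(\mathcal{C})$ rather than merely onto the localising subcategory generated by $\mathcal{X}$, and it enters exactly at the appeal to \cite[3.9.3]{SchShi03}.
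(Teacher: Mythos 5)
Your argument is exactly the one the paper intends (and the one in Schwede--Shipley): pass to the spectral replacement $\Sp(\mathcal{C})$ via Theorem \ref{spectralreplacement}, transport the compact generators and take bifibrant representatives, and then invoke \cite[3.9.1, 3.9.3]{SchShi03} to get the Quillen equivalence with $\leftmod\mathcal{E}(\mathcal{X})$; indeed the paper defines $\mathcal{E}(\mathcal{X})$ directly in terms of homomorphism spectra in $\Sp(\mathcal{C})$, so your comparison of $\mathcal{E}(\mathcal{X}')$ with $\mathcal{E}(\mathcal{X})$ is just the bookkeeping you describe. The proposal is correct and follows essentially the same route as the paper.
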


Recall that if $\mathcal{C}$ is stable then for any $X$ and $Y$ in $\mathcal{C}$, 
$[X,Y]^\mathcal{C}$ is an abelian group. This group is said to be rational if it is 
uniquely divisible for any $n \in \mathbb{Z}$. This is equivalent to asking that 
the canonical map $[X,Y]^\mathcal{C} \to [X,Y]^\mathcal{C} \otimes \mathbb{Q}$ is an isomorphism. 
We can extend this to the whole of $\mathcal{C}$

\begin{definition}
A stable model category $\mathcal{C}$ is said to be \textbf{rational} if 
for any $X$ and $Y$ in $\mathcal{C}$ the set of maps in the homotopy category
$[X,Y]^\mathcal{C}$ is always rational.  
\end{definition}

When $\mathcal{C}$ satisfies our usual assumptions and is also rational, 
we can use the results of \cite{shi07} to show that $\mathcal{C}$ is 
Quillen equivalent to a $\Ch(\mathbb{Q})$-model category. This is analogous to the previously stated results concerning
 a spectral model category but using 
$\Ch(\mathbb{Q})$ rather than the model category of spectra.
Specifically we use the above theorem to see that 
$\mathcal{C}$ is Quillen equivalent to $\leftmod\mathcal{E}(\mathcal{X})$.
Since $\mathcal{C}$ is rational, we have an objectwise Quillen equivalence
of spectral categories between $\mathcal{E}(\mathcal{X})$ and 
$\mathcal{E}(\mathcal{X}) \wedge  H \mathbb{Q}$. The results of 
\cite{shi07} give us a series of Quillen equivalences that we can 
apply to $\mathcal{E}(\mathcal{X}) \wedge  H \mathbb{Q}$
to obtain a $\Ch(\mathbb{Q})$-model category 
$\mathcal{E}_\mathbb{Q}(\mathcal{X})$. Furthermore 
these functors provide us with a specified isomorphism between
$\h_* \mathcal{E}_\mathbb{Q}(\mathcal{X})$
and 
$\pi_* \mathcal{E}(\mathcal{X})$.
More details can be found in 
\cite[Section 6]{Bar09}.

\begin{theorem}[Shipley]\label{thm:rationalmorita}
Let $\mathcal{C}$ be a cofibrantly generated, simplicial, proper, rational,  
stable model category with a set of compact generators $\mathcal{X}$. Then there 
is a zig-zag of Quillen equivalences
\[
\mathcal{C}
\simeq_Q
\leftmod\mathcal{E}_\mathbb{Q}(\mathcal{X}).
\]
between $\mathcal{C}$ and a $\Ch(\mathbb{Q})$-model category 
$\mathcal{E}_\mathbb{Q}(\mathcal{X})$.
Furthermore there is a specified isomorphism 
of categories enriched in graded abelian groups
between
$\h_* \mathcal{E}_\mathbb{Q}(\mathcal{X})$
and 
$\pi_* \mathcal{E}(\mathcal{X})$.
\end{theorem}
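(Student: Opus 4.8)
The plan is to reduce Theorem \ref{thm:rationalmorita} to the two results already available to us, namely the Schwede--Shipley Morita theorem (Theorem \ref{moritatheorem}) and Shipley's comparison of $H\mathbb{Q}$-algebra spectra with differential graded $\mathbb{Q}$-algebras in \cite{shi07}. First I would apply Theorem \ref{moritatheorem} to the hypothesised model category $\mathcal{C}$: since $\mathcal{C}$ is proper, cofibrantly generated, simplicial and stable with compact generators $\mathcal{X}$, we obtain a zig-zag of simplicial Quillen equivalences $\mathcal{C} \simeq_Q \leftmod\mathcal{E}(\mathcal{X})$, where $\mathcal{E}(\mathcal{X})$ is the endomorphism spectral category of (fibrant-cofibrant replacements of) the generators inside a spectral model $\Sp(\mathcal{C})$ of $\mathcal{C}$.

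Next I would exploit the rationality hypothesis. Because every $[X,Y]^\mathcal{C}$ is uniquely divisible, the homotopy groups $\pi_*\mathcal{E}(\mathcal{X})(X_1,X_2)$ are $\mathbb{Q}$-vector spaces, so the unit map $\mathcal{E}(\mathcal{X}) \to \mathcal{E}(\mathcal{X}) \wedge H\mathbb{Q}$ is an objectwise $\pi_*$-isomorphism of spectral categories. A standard argument (as in \cite[Section 6]{Bar09}) shows that an objectwise stable equivalence of spectral categories induces a Quillen equivalence on module categories, so $\leftmod\mathcal{E}(\mathcal{X}) \simeq_Q \leftmod(\mathcal{E}(\mathcal{X})\wedge H\mathbb{Q})$. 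Now $\mathcal{E}(\mathcal{X})\wedge H\mathbb{Q}$ is a small category enriched in $H\mathbb{Q}$-module spectra, i.e.\ a spectral category over $H\mathbb{Q}$-$\mathrm{mod}$. I would then invoke Shipley's chain of Quillen equivalences from \cite{shi07} relating $H\mathbb{Q}$-module spectra and $\Ch(\mathbb{Q})$; since each functor in that chain is lax symmetric monoidal (or can be chosen to be), it carries the enriched category $\mathcal{E}(\mathcal{X})\wedge H\mathbb{Q}$ to a $\Ch(\mathbb{Q})$-enriched category $\mathcal{E}_\mathbb{Q}(\mathcal{X})$ and induces a corresponding zig-zag of Quillen equivalences on the module categories $\leftmod(\mathcal{E}(\mathcal{X})\wedge H\mathbb{Q}) \simeq_Q \leftmod\mathcal{E}_\mathbb{Q}(\mathcal{X})$. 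Splicing these together yields the asserted zig-zag $\mathcal{C} \simeq_Q \leftmod\mathcal{E}_\mathbb{Q}(\mathcal{X})$.

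For the last clause, I would track the effect of Shipley's functors on homotopy/homology. Each equivalence in the \cite{shi07} zig-zag induces a natural isomorphism on homotopy groups (for the spectral side) compatible with the Dold--Kan-type passage to homology (on the chain complex side), and these isomorphisms are multiplicative, so the composite gives an isomorphism of graded abelian groups $\h_*\mathcal{E}_\mathbb{Q}(\mathcal{X})(X_1,X_2) \cong \pi_*(\mathcal{E}(\mathcal{X})\wedge H\mathbb{Q})(X_1,X_2) \cong \pi_*\mathcal{E}(\mathcal{X})(X_1,X_2)$, the last step using the rationality-induced equivalence above. Checking that these assemble into an isomorphism of categories enriched in graded abelian groups (i.e.\ that the isomorphisms respect composition and units) is the bookkeeping part; it follows from the lax monoidality of the comparison functors.

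The main obstacle I expect is the enriched-categorical upgrade of Shipley's theorem: \cite{shi07} is stated for (algebras and) modules, whereas here we need it applied to a \emph{category} with several objects enriched in $H\mathbb{Q}$-modules, and we need the output to be genuinely $\Ch(\mathbb{Q})$-enriched with the homology identification natural in both objects. This requires the comparison functors to be (at least weakly) lax symmetric monoidal so that they preserve enriched-category structure, and one must verify that the derived multiplicativity is coherent enough to produce a strict isomorphism of $\mathrm{gr}\text{-}\mathrm{Ab}$-categories rather than merely objectwise isomorphisms of graded groups. This is exactly the content worked out in \cite[Section 6]{Bar09}, which I would cite for the details, restricting attention here to explaining why the rationality hypothesis is what makes $\mathcal{E}(\mathcal{X}) \to \mathcal{E}(\mathcal{X})\wedge H\mathbb{Q}$ an equivalence and hence allows the passage to a $\Ch(\mathbb{Q})$-model in the first place.
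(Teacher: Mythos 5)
Your proposal follows essentially the same route as the paper: apply Theorem \ref{moritatheorem} to get $\mathcal{C}\simeq_Q \leftmod\mathcal{E}(\mathcal{X})$, use rationality to see that $\mathcal{E}(\mathcal{X})\to\mathcal{E}(\mathcal{X})\wedge H\mathbb{Q}$ is an objectwise stable equivalence inducing a Quillen equivalence of module categories, then push through Shipley's zig-zag from \cite{shi07} to land in $\Ch(\mathbb{Q})$-enriched modules, citing \cite[Section 6]{Bar09} for the multi-object and homology-identification details. This matches the paper's argument (which is given in the paragraph preceding the theorem and likewise defers the enriched bookkeeping to \cite{Bar09}), so no further comparison is needed.
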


\section{Rational \texorpdfstring{$G$}{G}-spectra and their endomorphism category}\label{Gspectra}

There are several Quillen equivalent constructions of spectra, each with their own various advantages and disadvantages. 
When one wants to consider $G$-spectra these differences become even more pronounced. 
We find that the most convenient construction is equivariant orthogonal spectra from \cite{mm02}, 
since the definitions can easily be generalised to the profinite case, as in \cite{fauskeqpro}. 
Recall that when we talk of a profinite group, we have assumed that group to be infinite.

Briefly, a $G$-spectrum $X$ consists of a collection of $G$-spaces $X(U)$, one for each finite dimensional real representation $U$ of $G$, with $G$-equivariant suspension maps $$S^V \wedge X(U) \to X(U \oplus V).$$ Here, $S^V$ is the one-point-compactification of the vector space $V$. A map of $G$-spectra is then a collection of $G$-maps $$f(U) : X(U) \to Y(U)$$ commuting with the suspension structure maps. An orthogonal $G$-spectrum has more structure still, but the underlying idea is the same. We denote the category of orthogonal $G$-spectra by $G\Sp$.

There are several model structures on $G \Sp$, which vary according to what subgroups of $G$ are of interest. 
We are concerned with all subgroups when $G$ is finite, only the closed subgroups when $G=S^1$ and 
only the open subgroups when $G$ is profinite. 
These choices are standard in topology, in each case the chosen collection is the one 
of most interest to topologists.  
Note that all subgroups of a finite group are closed and 
every open subgroup of a profinite group is closed. 
From now on we only talk of \emph{admissible} subgroups, taking it to mean 
any subgroup of a finite group, any closed subgroup of $S^1$ and any open 
subgroup of a profinite group. 

We thus have model structures on the categories of $G$-spectra for each type of group $G$. 
Following \cite{Bar09} these model categories can be rationalised, 
to form $L_\mathbb{Q} G\Sp$ which we denote as $\GMQ$.

\begin{theorem}
There is a model structure on $\GMQ$ such that the weak equivalences are those maps $f$ such that $\pi_*^H(f) \otimes \mathbb{Q}$ is an isomorphism for all admissible subgroups $H$ of $G$. This model category is proper, cofibrantly generated, monoidal and spectral.
\end{theorem}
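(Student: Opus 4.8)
\emph{Proof proposal.} The plan is to realise $\GMQ$ as a left Bousfield localisation of the stable model structure on orthogonal $G$-spectra. First I would fix the input: for $G$ a compact Lie group --- in particular for $G$ finite or $G=S^1$ --- the stable model structure on $G\Sp$ of \cite{mm02} taken with respect to the chosen family of admissible subgroups, and for $G$ profinite the analogous structure of \cite{fauskeqpro}. In each case this model category is proper, cofibrantly generated (in the topological sense), closed symmetric monoidal under the smash product, spectral, and has $\{\,G/H_+ \mid H \text{ admissible}\,\}$ as a set of compact generators. One then forms the localising set
\[
S = \{\, G/H_+ \wedge S^n \xrightarrow{\ \cdot k\ } G/H_+ \wedge S^n \ \mid\ H \text{ admissible},\ n \in \mathbb{Z},\ k \geq 1 \,\}
\]
of multiplication-by-$k$ maps between cofibrant models, which is genuinely a set since the admissible subgroups form a set.

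Next I would invoke the existence theorem for left Bousfield localisation at a set of maps, in the form valid for cofibrantly generated topological model categories (as in \cite{Bar09}), to produce $L_S G\Sp$; this is left proper and cofibrantly generated by construction, and it is what we call $\GMQ$. The first verification is that its weak equivalences are the ones named in the statement. An object $Z$ is $S$-local exactly when multiplication by each $k \geq 1$ is an isomorphism on every $[G/H_+ \wedge S^n, Z]_\ast = \pi^H_\ast(Z)$, i.e.\ exactly when every group $\pi^H_\ast(Z)$ is a $\mathbb{Q}$-vector space. Since the $G/H_+$ are compact generators and $[-,Z]^{G\Sp}_\ast$ takes values in rational abelian groups for such $Z$, a standard argument then identifies the $S$-equivalences with precisely those maps $f$ for which $\pi^H_\ast(f)\otimes\mathbb{Q}$ is an isomorphism for all admissible $H$.

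It remains to upgrade the structural properties. Spectrality of $L_S G\Sp$ descends from that of $G\Sp$ because $S$ is closed under smashing with spheres, so the localisation is still compatible with the $(\mathrm{SP})$-axiom. Right properness I would deduce from stability: in $\Ho(G\Sp)$ a map is a rational equivalence precisely when its homotopy fibre is rationally trivial, and homotopy fibres are preserved by homotopy pullback; as $G\Sp$ is right proper, a strict pullback of a rational equivalence along a fibration is a homotopy pullback and hence again a rational equivalence, giving right properness of $\GMQ$. For the monoidal structure the key point --- and the step I expect to be the main obstacle --- is that the rationally trivial $G$-spectra form a tensor ideal, i.e.\ smashing a rationally trivial $G$-spectrum with any cofibrant $G$-spectrum is rationally trivial. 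I would prove this by cell induction, reducing to the generators $G/K_+ \wedge S^m$ and using the Wirthm\"uller isomorphism to identify $X \wedge G/K_+$ with a spectrum induced from $\mathrm{res}_K X$, whose $H$-homotopy groups (for $H \leq K$) coincide with those of $X$ and are therefore rationally trivial when $X$ is. Granting this, the pushout--product axiom and the monoid axiom for $\GMQ$ follow from those of $G\Sp$, and $L_S G\Sp$ has all the asserted properties. For $G$ finite this recovers the construction of \cite{Bar09} and for $G$ profinite that of \cite{barnespadic}; the outline above handles all three cases uniformly.
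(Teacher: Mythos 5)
Your proposal is correct in outline and follows essentially the same route as the paper, which gives no proof of this theorem but defers to \cite{Bar09} (and \cite{barnespadic}, \cite{fauskeqpro}), where $\GMQ$ is likewise obtained as a left Bousfield localisation of the stable model structure on orthogonal $G$-spectra at a rationalising set of maps, with the monoidal and spectral properties checked via the same tensor-ideal/cell-induction style of argument you sketch. The only points to tighten are that the existence of the localisation relies on $G\Sp$ being cellular (or otherwise admitting Hirschhorn-type localisation) and left proper rather than merely cofibrantly generated and topological, and that for $G=S^1$ and finite $K$ the Wirthm\"uller identification of $X \wedge G/K_+$ involves a shift by the tangent representation and a double coset decomposition of its $\pi_*^H$ --- neither of which affects the rational-triviality conclusion you need.
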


The homotopy category $\Ho(\GMQ)$ has a finite set of compact generators in the case of a finite groups and a countable collection in the case of a profinite group or $S^1$.

\begin{lemma}\label{generators}
The fibrant replacements of the spectra $\Sigma^\infty G/H_+$ for $H$ an admissible subgroup form a set of compact generators denoted $\mathcal{G}_{top}$ for $\Ho(\GMQ)$.
\end{lemma}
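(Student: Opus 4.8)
The plan is to verify the two defining properties in turn: that the set $\mathcal{G}_{top} = \{ \Sigma^\infty G/H_+ \mid H \text{ admissible}\}$ detects isomorphisms, and that each $\Sigma^\infty G/H_+$ is a compact object of $\Ho(\GMQ)$. For the generating property, the key input is the Wirthmüller-type adjunction identifying $[\Sigma^\infty G/H_+, X]_*^{\GMQ}$ with the rational equivariant homotopy groups $\pi_*^H(X) \otimes \mathbb{Q}$. This is immediate from the definition of the model structure in the preceding theorem: the weak equivalences are precisely the maps inducing isomorphisms on all $\pi_*^H \otimes \mathbb{Q}$ for admissible $H$, so a map $f \colon A \to B$ of fibrant-cofibrant objects is a weak equivalence — hence an isomorphism in the homotopy category — exactly when $f_*$ is an isomorphism on $[\Sigma^\infty G/H_+, -]_*$ for all admissible $H$. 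This is the whole content of being a generating set, so this half is essentially a restatement of the construction of $\GMQ$ via Bousfield localisation, combined with the standard fact that $[\Sigma^\infty G/H_+, X]_n^{G\Sp} \cong \pi_n^H(X)$ and that rationalisation turns this into $\pi_n^H(X)\otimes\mathbb{Q}$.

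For compactness, I would argue that $\Sigma^\infty G/H_+$ is already compact in the non-rational category $\Ho(G\Sp)$ — this is standard for orthogonal $G$-spectra when $G$ is finite (the suspension spectra of orbits are the canonical compact generators) and extends to the profinite and $S^1$ cases because $G/H$ is, for $H$ admissible, a compact manifold (finite $G$-CW in the finite case, a finite discrete $G$-set in the profinite case since $H$ is open, and a closed subgroup quotient of $S^1$, i.e. again a circle or a point, in the $S^1$ case); the corresponding orbit spectrum is a finite $G$-cell spectrum and hence compact. The point is then that compactness is preserved under the rationalisation functor $L_\mathbb{Q}$: since $L_\mathbb{Q}$ is a smashing localisation (rationalisation is smashing: $L_\mathbb{Q} X \simeq X \wedge L_\mathbb{Q} S^0$, and $L_\mathbb{Q}S^0 = S^0_\mathbb{Q}$ is a filtered homotopy colimit of copies of $S^0$), the localised sphere itself is dualizable enough that $[L_\mathbb{Q}\Sigma^\infty G/H_+, -]_*^{\GMQ}$ commutes with coproducts. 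Concretely, for a coproduct $\coprod_i X_i$ of rational $G$-spectra one has $[\Sigma^\infty G/H_+, \coprod_i X_i]_*^{\GMQ} \cong \pi_*^H(\coprod_i X_i)\otimes\mathbb{Q}$, and since $\Sigma^\infty G/H_+$ is compact in $G\Sp$ and $-\otimes\mathbb{Q}$ is exact and commutes with filtered colimits (in particular with coproducts of abelian groups), this is $\bigoplus_i \pi_*^H(X_i)\otimes\mathbb{Q} \cong \bigoplus_i [\Sigma^\infty G/H_+, X_i]_*^{\GMQ}$.

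The main obstacle I anticipate is the bookkeeping in the profinite and $S^1$ cases rather than any deep difficulty: one must make sure that the collection of admissible subgroups (open for profinite, closed for $S^1$) is exactly the one for which the localised model structure's weak equivalences are defined, and that the generators listed are the fibrant replacements of $\Sigma^\infty G/H_+$ for $H$ in that collection — this is where one must cite \cite{fauskeqpro} and \cite{barnespadic} for the profinite case and \cite{GS11} for $S^1$ to know that no further subgroups are needed and that these orbits genuinely give compact objects in the relevant localised category. The subtlety in the profinite case is that $G$ itself is not admissible (it is not open in itself when infinite), so the "free" orbit does not appear and one works with the countable family indexed by open subgroups; one should remark that this family is genuinely infinite and that, as noted in the introduction, the coproduct over it fails to be compact, so the set cannot be replaced by a single generator. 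Taking fibrant replacements is needed only so that the homotopy-category mapping groups are computed correctly and so that the objects lie in the image of $\mathcal{G}_{top}$ under the later Morita-theoretic constructions; it does not affect either the generating or the compactness property, which are invariant under weak equivalence.
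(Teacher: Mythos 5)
Your argument is correct, and it is essentially the standard one; note, though, that the paper does not prove this lemma at all, but simply refers the reader to \cite[Lemma 4.6]{fauskeqpro} for the profinite case and \cite[Section 2.1]{gre99} for $S^1$ (the finite case being classical). Those references argue along the same lines you do: the orbit spectra $\Sigma^\infty G/H_+$ corepresent the equivariant homotopy groups $\pi_*^H$, so in the rational model structure, whose weak equivalences are by definition the $\pi_*^H(-)\otimes\mathbb{Q}$-isomorphisms for admissible $H$, the generating property is immediate; and compactness follows because each $G/H$ is a finite $G$-CW object (finite discrete for $H$ open in a profinite $G$, a point or circle for $H$ closed in $S^1$), with compactness surviving rationalisation since the localisation is smashing and $-\otimes\mathbb{Q}$ commutes with direct sums. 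So your write-up supplies the proof the paper outsources, which is a reasonable thing to do. Two small points: the adjunction identifying $[\Sigma^\infty G/H_+,X]^{G\Sp}_*$ with $\pi_*^H(X)$ is the induction--restriction (change-of-groups) adjunction $G_+\wedge_H(-)\dashv \mathrm{res}^G_H$, not Wirthm\"uller duality, which is the separate statement comparing induction with coinduction; and when you pass from $[\Sigma^\infty G/H_+,-]^{G\Sp}$ to $[\Sigma^\infty G/H_+,-]^{\GMQ}$ you should say explicitly that maps in the localised homotopy category out of a cofibrant object into a local object agree with maps in $\Ho(G\Sp)$, and that a coproduct of local objects is again local because a direct sum of rational homotopy groups is rational --- this is the step where the smashing property is actually used.
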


For a proof of this, see e.g. \cite[Lemma 4.6]{fauskeqpro} for profinite groups or \cite[Section 2.1]{gre99} for the case of $S^1$.

The spectra $\Sigma^\infty G/H_+$ themselves are usually chosen to form $\mathcal{G}_{top}$, but for technical reasons we would like the generators to be fibrant and cofibrant. They are cofibrant to begin with and taking the fibrant replacement obviously does not change their property as generators. We denote these replacements by $\{ \Sigma^\infty_f G/H_+ \}$.

We now take a closer look at the endomorphism category $\mathcal{E}_{top}:=\mathcal{E}(\mathcal{G}_{top})$ of $\GMQ$. We remember from Theorem \ref{moritatheorem} that $\GMQ$ is Quillen equivalent to the category of modules $\leftmod\mathcal{E}_{top}$ over a spectral category $\mathcal{E}_{top}$. The category $\mathcal{E}_{top}$ has objects $\mathcal{G}_{top}=\{ \Sigma^\infty_f G/H_+ \}$. For $\sigma_1, \sigma_2 \in \mathcal{G}_{top}$, the morphisms are defined as $$\mathcal{E}_{top}(\sigma_1,\sigma_2)=\Hom_{\GMQ}(\sigma_1,\sigma_2).$$

We now introduce another bit of notation. Let $\sigma_1, \sigma_2 \in \mathcal{G}_{top}$ be two generators. Let $\underline{A}(\sigma_1,\sigma_2)$ denote $\pi_0(\mathcal{E}_{top}(\sigma_1,\sigma_2))$. Using the composition $$\mathcal{E}_{top}(\sigma_2,\sigma_3) \wedge \mathcal{E}_{top}(\sigma_1,\sigma_2) \longrightarrow \mathcal{E}_{top}(\sigma_1,\sigma_3)$$ we see that $\underline{A}$ forms a \emph{ringoid} or \emph{ring with several objects}, i.e. a category with objects $\mathcal{G}_{top}$ and morphisms $\underline{A}(\sigma_1,\sigma_2)$ together with composition maps $$\underline{A}(\sigma_2,\sigma_3) \otimes \underline{A}(\sigma_1,\sigma_2) \longrightarrow \underline{A}(\sigma_1,\sigma_3)$$ satisfying associativity and unital conditions.

Applying the Eilenberg-MacLane functor $H$ (see e.g. \cite{HSS} or \cite{Sch08}) then yields another spectral category $H\underline{A}$.

Let us return to $\mathcal{E}_{top}$, in \cite[Appendix A]{GreMay95}, Greenlees and May computed the groups $$[\Sigma^\infty G/H_+, \Sigma^\infty G/K_+]^{G\Sp}_* \otimes \mathbb{Q}$$ for subgroups $H$ and $K$ of a finite group $G$. Using the Segal-tom Dieck splitting result \cite[7.10]{fauskeqpro} one can similarly compute these groups in the case of a profinite group. In section 
\ref{sec:circlecase} we provide the analogous calculation for $S^1$, note that the following
theorem does not hold when $G=S^1$.

\begin{theorem}\label{homotopygroups}
Let $G$ be a finite or a profinite group. 
In degrees away from zero, $[\Sigma^\infty G/H_+, \Sigma^\infty G/K_+]^{G\Sp}_* $ is torsion. Hence, the homotopy groups of the spectrum $\mathcal{E}_{top}(\sigma_1,\sigma_2)$ are concentrated in degree zero.
\end{theorem}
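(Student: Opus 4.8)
The plan is to identify these graded morphism groups with equivariant stable homotopy groups, apply a Segal--tom Dieck splitting to reduce to ordinary stable homotopy groups of classifying spaces, and then finish using Serre's finiteness theorem. First, since $\Sigma^\infty G/H_+ = G_+ \wedge_H \mathbb{S}$ is induced up from the trivial $H$-spectrum, the induction--restriction adjunction gives
\[
[\Sigma^\infty G/H_+, \Sigma^\infty G/K_+]^{G\Sp}_* \;\cong\; \pi^H_*\big( \mathrm{res}^G_H \Sigma^\infty (G/K)_+ \big).
\]
Because $K$ is admissible, $G/K$ is a finite discrete $G$-set, so its restriction to $H$ is a finite disjoint union of orbits $H/L_i$ with each $L_i = H \cap gKg^{-1}$ an admissible subgroup of $H$ (one for each double coset in $H\backslash G / K$). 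As $\pi^H_*$ sends finite wedges to direct sums, it is enough to prove that $\pi^H_*(\Sigma^\infty (H/L)_+)$ is torsion in nonzero degrees for every admissible $L \le H$.

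Next I would invoke the Segal--tom Dieck splitting --- the classical statement for finite groups recalled in \cite[Appendix A]{GreMay95}, and \cite[7.10]{fauskeqpro} in the profinite case --- to obtain a natural isomorphism
\[
\pi^H_*(\Sigma^\infty (H/L)_+) \;\cong\; \bigoplus_{(M)} \pi^s_* \Big( \big( E W_H(M) \big)_+ \wedge_{W_H(M)} \big( (H/L)^M \big)_+ \Big),
\]
the sum running over conjugacy classes of admissible subgroups $M$ of $H$. Here $(H/L)^M$ is a finite $W_H(M)$-set, empty unless $M$ is subconjugate to $L$, so only finitely many $M$ contribute, and each summand is $\pi^s_*$ of a finite disjoint union of spaces of the form $BQ$ for $Q$ a subquotient of $H$. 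When $G$ is finite the groups $Q$ are finite; when $G$ is profinite they are (pro)finite, and then $BQ$ is a filtered homotopy colimit of $BQ'$ with $Q'$ finite, so in either case $\widetilde{H}_p(BQ;\mathbb{Z})$ is a filtered colimit of torsion groups, hence torsion, for every $p > 0$.

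Finally I would run the Atiyah--Hirzebruch spectral sequence $H_p(BQ; \pi^s_q(\mathbb{S})) \Rightarrow \pi^s_{p+q}(BQ_+)$ for each factor. In total degree zero the only surviving term is $H_0(BQ;\mathbb{Z}) = \mathbb{Z}$ per connected component; in negative degrees everything vanishes since $\pi^s_{<0}(\mathbb{S}) = 0$; and in positive total degree every term $H_p(BQ; \pi^s_q(\mathbb{S}))$ is torsion --- if $p > 0$ because $\widetilde H_p(BQ;\mathbb{Z})$ is, and if $q > 0$ because $\pi^s_q(\mathbb{S})$ is finite by Serre. Thus $\pi^s_n(BQ_+)$ is torsion for all $n \neq 0$, so the direct sum above is torsion in nonzero degrees, which proves the first assertion. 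The second assertion is then immediate: the homotopy groups of $\mathcal{E}_{top}(\sigma_1,\sigma_2)$ are $[\Sigma^\infty G/H_+, \Sigma^\infty G/K_+]^{G\Sp}_* \otimes \mathbb{Q}$, and rationalising annihilates all the torsion away from degree zero.

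I expect the main obstacle to lie in the profinite instance of the penultimate step: one must check carefully that the homotopy-orbit spaces produced by Fausk's splitting have only torsion reduced integral homology in positive degrees, since the Weyl-type groups $W_H(M)$ can be infinite profinite groups whose classifying spaces are not finite complexes. The description of these spaces as filtered colimits of classifying spaces of finite groups is what makes the homology computation --- and hence the whole argument --- go through.
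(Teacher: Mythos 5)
Your proposal is correct and follows essentially the same route as the paper, which simply cites the Greenlees--May computation (Appendix A of \cite{GreMay95}) for finite groups and the Segal--tom Dieck splitting \cite[7.10]{fauskeqpro} for the profinite case; you have merely filled in the Atiyah--Hirzebruch and Serre-finiteness details. Your closing worry about infinite Weyl-type groups in the profinite case is unfounded: the relevant subgroups $M$ are open, so each normalizer $N_H(M)$ is open and the groups $N_H(M)/M$ are finite, meaning every classifying space appearing in the splitting is that of a finite group.
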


It is not too surprising that a symmetric spectrum with homotopy groups concentrated in degree zero is weakly equivalent to an Eilenberg-MacLane spectrum. For a statement like this, see \cite[Theorem 4.22]{Sch07b}. However, we are also after a statement about the category of modules over this spectral category. Schwede and Shipley prove the following in \cite[Theorem A.1.1 and Proposition B.2.1]{SchShi03}

\begin{theorem}[Schwede-Shipley]\label{EMLmodules}
Let $\underline{R}$ be a spectral category whose morphism spectra are fibrant in $\Sp$ and have homotopy groups concentrated in degree zero. Then the module categories $\leftmod\underline{R}$ and $\leftmod \h \underline{\pi_0 R}$ are related by a chain of Quillen equivalences.
\end{theorem}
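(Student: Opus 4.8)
The plan is to reduce the statement to a single zig-zag of objectwise Quillen equivalences of spectral categories, and then invoke the fact that an objectwise Quillen equivalence of spectral categories induces a Quillen equivalence of the associated module categories. First I would recall the basic principle, which is exactly \cite[Theorem A.1.1]{SchShi03} together with the naturality of the module-category construction: if $\underline{R} \to \underline{S}$ is a spectral functor which is the identity on objects and is an objectwise stable equivalence of symmetric spectra, then restriction and extension of scalars give a Quillen equivalence $\leftmod\underline{R} \simeq_Q \leftmod\underline{S}$ for the model structures of \cite[Theorem A.1.1]{SchShi03}, where weak equivalences and fibrations are detected objectwise. So it suffices to connect $\underline{R}$ and $\h\underline{\pi_0 R}$ by a chain of such objectwise stable equivalences of spectral categories.

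The key step is the construction of that chain, and this is where the hypotheses on $\underline{R}$ are used. Each morphism spectrum $\underline{R}(X,Y)$ is fibrant and has homotopy groups concentrated in degree zero, so it is, objectwise, a stable equivalence target for an Eilenberg–MacLane spectrum on $\pi_0$; the content of \cite[Proposition B.2.1]{SchShi03} is that this can be done compatibly with the composition and unit maps, i.e. at the level of spectral categories rather than just individual spectra. Concretely I would build a spectral category $\underline{R}^c$ by functorially replacing each $\underline{R}(X,Y)$ by a connective cover (killing homotopy in negative degrees), giving a spectral functor $\underline{R}^c \to \underline{R}$ which is an objectwise stable equivalence since the negative homotopy already vanishes; then take the zeroth Postnikov truncation $\underline{R}^c \to P_0\underline{R}^c$, again objectwise a stable equivalence because only $\pi_0$ survives; and finally identify $P_0\underline{R}^c$ with $\h\underline{\pi_0 R}$ via the natural comparison of a connective spectrum with $\pi_0$ concentrated spectrum and its Eilenberg–MacLane model, which is an objectwise stable equivalence of spectral categories. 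Stringing these together yields
\[
\underline{R}
\,\,\xymatrix{{} \ar[r]^{\sim} & {}}\,\,
\underline{R}^c
\,\,\xymatrix{{} \ar[r]^{\sim} & {}}\,\,
P_0\underline{R}^c
\,\,\xymatrix{{} \ar[r]^{\sim} & {}}\,\,
\h\underline{\pi_0 R},
\]
a zig-zag of objectwise stable equivalences of spectral categories.

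Applying the first principle to each arrow in this zig-zag then produces the asserted chain of Quillen equivalences $\leftmod\underline{R} \simeq_Q \leftmod\h\underline{\pi_0 R}$, since each stage has fibrant morphism spectra (the connective cover and Postnikov stages can be taken fibrant, and Eilenberg–MacLane spectra are fibrant in $\Sp$) so that the model structures of \cite[Theorem A.1.1]{SchShi03} are available throughout. The main obstacle is the middle step: making the Postnikov truncation and the connective-cover functors strictly \emph{multiplicative}, so that they assemble into honest spectral functors compatible with all the composition and unit data of the ringoid, rather than merely commuting with composition up to homotopy. This is precisely the technical heart of \cite[Proposition B.2.1]{SchShi03}, and in the write-up I would simply cite it, observing that the fibrancy and degree-zero concentration hypotheses are exactly what that proposition requires and that the objects $\mathcal{E}_{top}(\sigma_1,\sigma_2)$ we care about satisfy them by Theorem \ref{homotopygroups} after a fibrant replacement.
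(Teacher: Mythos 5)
Your proposal is correct and follows essentially the same route as the paper, which offers no independent argument for this theorem but simply quotes Schwede--Shipley, citing precisely the two ingredients you use: Theorem A.1.1 of \cite{SchShi03} to turn objectwise stable equivalences of spectral categories (with the same objects) into Quillen equivalences of module categories via restriction and extension of scalars, and Proposition B.2.1 of \cite{SchShi03} for the multiplicative zig-zag connecting $\underline{R}$ to $\h \underline{\pi_0 R}$. Your sketch of what lies inside B.2.1 --- connective covers and a zeroth Postnikov truncation made strictly compatible with composition and units, then identified with the Eilenberg--MacLane model --- is a fair gloss on that proposition, and your observation that the fibrancy and degree-zero hypotheses are exactly what it needs (and are satisfied by the $\mathcal{E}_{top}(\sigma_1,\sigma_2)$ by Theorem \ref{homotopygroups}) is exactly how the paper applies it.
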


We can apply this theorem to the case of a finite or a profinite group. 
We have chosen our generators $\mathcal{G}_{top}$ to be fibrant and cofibrant. Hence $\mathcal{E}_{top}(\sigma_1,\sigma_2)=\Hom_\GMQ(\sigma_1,\sigma_2)$ is fibrant as for cofibrant $\sigma_1$, $\Hom_\GMQ(\sigma_1,-)$ is a right Quillen functor by definition.

\begin{corollary}
Let $G$ be a finite or a profinite group. 
The categories $\leftmod\mathcal{E}_{top}$ and $\leftmod \h \underline{A}$ are Quillen equivalent.
\end{corollary}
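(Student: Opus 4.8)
The plan is to deduce the corollary directly from Theorem~\ref{EMLmodules} by verifying its two hypotheses for the spectral category $\mathcal{E}_{top}$. First I would recall that $\mathcal{E}_{top}$ is the endomorphism category on the generators $\mathcal{G}_{top} = \{\Sigma^\infty_f G/H_+\}$, whose morphism objects are the homomorphism spectra $\mathcal{E}_{top}(\sigma_1,\sigma_2) = \Hom_{\GMQ}(\sigma_1,\sigma_2)$. The two things to check are: (i) each such morphism spectrum is fibrant in $\Sp$, and (ii) each has homotopy groups concentrated in degree zero.

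For fibrancy, I would use the argument already sketched just before the statement: since the generators were chosen to be both fibrant and cofibrant, and since for cofibrant $\sigma_1$ the enriched functor $\Hom_{\GMQ}(\sigma_1,-)$ is a right Quillen functor (this is part of what it means for $\GMQ$ to be a spectral model category, axiom (SP)), it sends the fibrant object $\sigma_2$ to a fibrant symmetric spectrum. Hence $\mathcal{E}_{top}(\sigma_1,\sigma_2)$ is fibrant in $\Sp$. For the concentration of homotopy groups, I would invoke Theorem~\ref{homotopygroups}: for $G$ finite or profinite, $\pi_*\mathcal{E}_{top}(\sigma_1,\sigma_2) \cong [\Sigma^\infty G/H_+, \Sigma^\infty G/K_+]^{G\Sp}_* \otimes \mathbb{Q}$ vanishes away from degree zero because those groups are torsion and we have rationalised. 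This is exactly the second hypothesis. Note one should be slightly careful that $\pi_*$ of the homomorphism spectrum in $\GMQ$ computes the rationalised $G\Sp$-maps between the (fibrant replacements of the) generators; this identification is part of the setup of $\GMQ = L_\mathbb{Q} G\Sp$ and Lemma~\ref{generators}.

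With both hypotheses verified, Theorem~\ref{EMLmodules} applies verbatim and gives a chain of Quillen equivalences between $\leftmod\mathcal{E}_{top}$ and $\leftmod \h\underline{\pi_0\mathcal{E}_{top}}$. It then only remains to identify $\underline{\pi_0\mathcal{E}_{top}}$ with $\underline{A}$: but this is a definition, since $\underline{A}(\sigma_1,\sigma_2) = \pi_0(\mathcal{E}_{top}(\sigma_1,\sigma_2))$ as a ringoid with the composition inherited from $\mathcal{E}_{top}$, so $\h\underline{\pi_0\mathcal{E}_{top}} = \h\underline{A}$ as spectral categories. This yields the desired chain of Quillen equivalences $\leftmod\mathcal{E}_{top} \simeq_Q \leftmod\h\underline{A}$.

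The only real obstacle is bookkeeping rather than mathematics: one must make sure the hypotheses of Theorem~\ref{EMLmodules} are stated for the same model of spectra and the same notion of fibrancy as we have for $\mathcal{E}_{top}$, and that ``homotopy groups concentrated in degree zero'' is interpreted with the correct (rationalised) homotopy groups. Everything substantive has already been done — the hard computational input is Theorem~\ref{homotopygroups} (the Greenlees--May calculation and its profinite analogue via the Segal--tom Dieck splitting), and the hard categorical input is the Schwede--Shipley Eilenberg--MacLane rigidity result, Theorem~\ref{EMLmodules}. So the corollary is essentially a formal consequence, and the proof should be short.
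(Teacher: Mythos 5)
Your proposal is correct and follows the paper's own argument exactly: the paper likewise deduces the corollary from Theorem~\ref{EMLmodules}, checking fibrancy of the morphism spectra via the fibrant-cofibrant choice of generators and the right Quillen functor $\Hom_{\GMQ}(\sigma_1,-)$, using Theorem~\ref{homotopygroups} for the concentration of homotopy groups in degree zero, and identifying $\pi_0\mathcal{E}_{top}$ with $\underline{A}$ by definition.
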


Combining this with Theorem \ref{moritatheorem} yields the following corollary which, in the finite case, is \cite[5.1.2]{SchShi03}. 
The first author has also considered monoidal structures for finite
groups or the $p$-adic integers in \cite{Bar09} and \cite{barnespadic}.

\begin{corollary}\label{GMQEML}
Let $G$ be a finite or a profinite group. 
There is a chain of Quillen equivalences between rational $G$-spectra $\GMQ$ and $\leftmod \h \underline{A}$.
\end{corollary}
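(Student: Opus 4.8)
The plan is to simply concatenate the two Quillen equivalences we have already assembled. By Theorem \ref{moritatheorem} applied to $\mathcal{C} = \GMQ$ (which is proper, cofibrantly generated, simplicial and stable, and has the set of compact generators $\mathcal{G}_{top}$ by Lemma \ref{generators}), there is a zig-zag of simplicial Quillen equivalences
\[
\GMQ \simeq_Q \leftmod \mathcal{E}_{top}.
\]
Then by the Corollary immediately preceding — which uses Theorem \ref{homotogygroups} to see that each morphism spectrum $\mathcal{E}_{top}(\sigma_1,\sigma_2)$ has homotopy concentrated in degree zero, together with the observation that our chosen generators are fibrant and cofibrant so that these morphism spectra are fibrant in $\Sp$, and then applies Theorem \ref{EMLmodules} — there is a further chain of Quillen equivalences
\[
\leftmod \mathcal{E}_{top} \simeq_Q \leftmod \h\underline{A},
\]
where $\underline{A}(\sigma_1,\sigma_2) = \pi_0(\mathcal{E}_{top}(\sigma_1,\sigma_2))$ with its ringoid composition. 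Splicing these two zig-zags together yields the desired chain of Quillen equivalences between $\GMQ$ and $\leftmod \h\underline{A}$.

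The only points that need a word of care are bookkeeping rather than substance. First, one should check that the hypotheses of Theorem \ref{EMLmodules} are genuinely met by $\mathcal{E}_{top}$: fibrancy of the morphism spectra follows because $\Sigma^\infty_f G/H_+$ is cofibrant and $\Sigma^\infty_f G/K_+$ is fibrant, so $\Hom_\GMQ(\Sigma^\infty_f G/H_+,-)$ is a right Quillen functor and sends the fibrant object $\Sigma^\infty_f G/K_+$ to a fibrant symmetric spectrum; and the homotopy-concentration hypothesis is exactly the content of Theorem \ref{homotopygroups} for $G$ finite or profinite. Second, one should note that Theorem \ref{EMLmodules} identifies the target as $\leftmod \h\underline{\pi_0 \mathcal{E}_{top}}$, and $\underline{\pi_0 \mathcal{E}_{top}}$ is by definition the ringoid $\underline{A}$, so the two notations agree.

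There is no real obstacle here; the statement is a formal consequence of the two results it quotes, and the proof is essentially a one-line composition. If anything, the ``hard part'' is entirely external to this corollary — it lives in Theorem \ref{homotopygroups}, i.e. in the Greenlees–May computation (and its profinite analogue via the Segal–tom Dieck splitting) showing that the rational stable maps $[\Sigma^\infty G/H_+, \Sigma^\infty G/K_+]^{G\Sp}_*$ vanish away from degree zero. Granting that input, the present corollary follows immediately by transitivity of Quillen equivalence.
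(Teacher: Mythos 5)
Your proof is correct and is essentially the paper's own argument: the corollary is obtained there by combining Theorem \ref{moritatheorem} (giving $\GMQ \simeq_Q \leftmod\mathcal{E}_{top}$) with the immediately preceding corollary (giving $\leftmod\mathcal{E}_{top} \simeq_Q \leftmod \h\underline{A}$ via Theorem \ref{homotopygroups}, the fibrancy of the morphism spectra, and Theorem \ref{EMLmodules}), exactly as you describe.
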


\section{Rigidity for Finite and Profinite groups}\label{Quillenequivalence}

We are finally going to put together the results from the previous section to obtain our main theorem.

\begin{theorem}
Let $G$ be either a finite group or a profinite group. Let $\mathcal{C}$ be a cofibrantly generated, proper, simplicial, stable model category together with an equivalence of triangulated categories
\[
\Phi: \Ho(\GMQ) \longrightarrow \Ho(\mathcal{C}).
\]
Then $\GMQ$ and $\mathcal{C}$ are Quillen equivalent.
\end{theorem}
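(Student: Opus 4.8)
The plan is to run Shipley's rational Morita machinery (Theorem \ref{thm:rationalmorita}) on \emph{both} $\mathcal{C}$ and $\GMQ$, and then identify the two resulting $\Ch(\mathbb{Q})$-model categories by a formality argument fed by the degree-zero concentration of the topological endomorphism ringoid (Theorem \ref{homotopygroups}). First I would transport the generators along $\Phi$: since $\Phi$ is an equivalence of triangulated categories and $\mathcal{G}_{top}$ is a set of compact generators for $\Ho(\GMQ)$ (Lemma \ref{generators}), the set $\mathcal{X}:=\Phi(\mathcal{G}_{top})$ is a set of compact generators for $\Ho(\mathcal{C})$ --- generation is immediate, and compactness survives because an equivalence of categories commutes with all existing coproducts. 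Next I would observe that $\mathcal{C}$ is rational: for $A,B\in\mathcal{C}$ one has $[A,B]^{\mathcal{C}}\cong[\Phi^{-1}A,\Phi^{-1}B]^{\GMQ}$, and the latter is a $\mathbb{Q}$-vector space since $\GMQ$ is a rationalisation. Thus $\mathcal{C}$ satisfies all the hypotheses of Theorem \ref{thm:rationalmorita}, and so does $\GMQ$ (it is proper, cofibrantly generated, spectral hence simplicial, stable and rational).

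Applying Theorem \ref{thm:rationalmorita} to each side produces $\Ch(\mathbb{Q})$-model categories $\mathcal{E}_\mathbb{Q}(\mathcal{X})$ and $\mathcal{E}_\mathbb{Q}(\mathcal{G}_{top})$ together with zig-zags of Quillen equivalences
\[
\mathcal{C}\simeq_Q\leftmod\mathcal{E}_\mathbb{Q}(\mathcal{X}),
\qquad
\GMQ\simeq_Q\leftmod\mathcal{E}_\mathbb{Q}(\mathcal{G}_{top}),
\]
and specified isomorphisms of categories enriched in graded abelian groups $\h_*\mathcal{E}_\mathbb{Q}(\mathcal{X})\cong\pi_*\mathcal{E}(\mathcal{X})$ and $\h_*\mathcal{E}_\mathbb{Q}(\mathcal{G}_{top})\cong\pi_*\mathcal{E}_{top}$. (Alternatively, the right-hand statement is essentially Corollary \ref{GMQEML} combined with the chain of equivalences of \cite{shi07} identifying $\leftmod\h\underline{A}$ with the corresponding $\Ch(\mathbb{Q})$-module category.) Since all generators may be taken fibrant and cofibrant, $\pi_n\mathcal{E}(\mathcal{X})(X_1,X_2)=[X_1,X_2]^{\mathcal{C}}_n$ and $\pi_n\mathcal{E}_{top}(\sigma_1,\sigma_2)=[\sigma_1,\sigma_2]^{\GMQ}_n$, compatibly with composition, so $\Phi$ furnishes an isomorphism of graded ringoids $\pi_*\mathcal{E}(\mathcal{X})\cong\pi_*\mathcal{E}_{top}$.

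Now Theorem \ref{homotopygroups} says $\pi_*\mathcal{E}_{top}$ is concentrated in degree zero, where it equals $\underline{A}$; hence the same holds for $\pi_*\mathcal{E}(\mathcal{X})$, and feeding this through the isomorphisms above shows that both $\mathcal{E}_\mathbb{Q}(\mathcal{X})$ and $\mathcal{E}_\mathbb{Q}(\mathcal{G}_{top})$ are DG-ringoids over $\mathbb{Q}$ whose homology is concentrated in degree zero and equal to $\underline{A}$ there. A DG-ringoid over a field with homology in a single degree is intrinsically formal: objectwise good truncation gives quasi-isomorphisms of $\Ch(\mathbb{Q})$-enriched categories
\[
\mathcal{E}_\mathbb{Q}(\mathcal{X})\xleftarrow{\ \sim\ }\tau_{\geq 0}\mathcal{E}_\mathbb{Q}(\mathcal{X})\xrightarrow{\ \sim\ }\underline{A},
\]
where $\underline{A}$ is regarded as a DG-ringoid concentrated in degree zero, and similarly for $\mathcal{E}_\mathbb{Q}(\mathcal{G}_{top})$. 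Over $\mathbb{Q}$ every chain complex is cofibrant, so a pointwise quasi-isomorphism of DG-ringoids induces a Quillen equivalence of module categories, giving $\leftmod\mathcal{E}_\mathbb{Q}(\mathcal{X})\simeq_Q\leftmod\underline{A}\simeq_Q\leftmod\mathcal{E}_\mathbb{Q}(\mathcal{G}_{top})$; splicing all the zig-zags yields $\mathcal{C}\simeq_Q\GMQ$. The genuine content sits in Theorem \ref{homotopygroups}: without the vanishing of the higher homotopy of $\mathcal{E}_{top}$, the endomorphism DGA could carry a non-formal $A_\infty$-structure invisible to a triangulated equivalence, and the argument would collapse. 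Given that input, the main friction I anticipate is purely organisational --- keeping track of the ringoid (as opposed to single-DGA) bookkeeping, checking that the specified isomorphisms of Theorem \ref{thm:rationalmorita} respect composition, and invoking the module-level consequence of a pointwise quasi-isomorphism of possibly large $\Ch(\mathbb{Q})$-categories, which over $\mathbb{Q}$ reduces objectwise to the corresponding statement in \cite{shi07}.
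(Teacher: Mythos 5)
Your proposal is correct, but it takes a genuinely different route from the paper's own proof. The paper never leaves the world of spectral categories: it applies Theorem \ref{moritatheorem} to both sides to get $\GMQ \simeq_Q \leftmod\mathcal{E}_{top}$ and $\mathcal{C} \simeq_Q \leftmod\mathcal{E}(\mathcal{X})$, computes (exactly as you do, via the $\Sigma^\infty \dashv Ev_0$ adjunction and $\Phi$) that the morphism spectra of $\mathcal{E}(\mathcal{X})$ have the same homotopy groups as those of $\mathcal{E}_{top}$, concentrated in degree zero by Theorem \ref{homotopygroups}, and then invokes the Schwede--Shipley Eilenberg--MacLane comparison (Theorem \ref{EMLmodules}) to identify both module categories with $\leftmod \h\underline{A}$, using Corollary \ref{GMQEML} on the $\GMQ$ side. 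You instead push both sides into $\Ch(\mathbb{Q})$ via Shipley's rational Morita theorem (Theorem \ref{thm:rationalmorita}) and finish with an explicit intrinsic-formality/truncation argument for DG-ringoids with homology in degree zero; this is precisely the ``intrinsic formality implies rigidity'' mechanism the paper only articulates afterwards, in Section \ref{sec:formality}, where it remarks that the finite and profinite cases are instances of it. The trade-offs: your route obliges you to verify that $\mathcal{C}$ is rational and that $\Phi$ preserves compactness of the generators (both of which you do, and the compactness point is one the paper's proof quietly takes for granted), and to check that the truncation maps and the specified isomorphisms of Theorem \ref{thm:rationalmorita} respect composition, i.e.\ genuine ringoid bookkeeping; the paper's route avoids the rationality check entirely, since Theorems \ref{moritatheorem} and \ref{EMLmodules} need no rational hypothesis, and outsources the formality content to an off-the-shelf spectrum-level theorem, \cite[Theorem A.1.1 and Proposition B.2.1]{SchShi03}. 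Your one slightly shaky justification --- that a pointwise quasi-isomorphism of DG-ringoids induces a Quillen equivalence of module categories ``because over $\mathbb{Q}$ every complex is cofibrant'' --- is harmless, since that statement holds for quasi-isomorphisms of DGAs (and their many-object versions) over any ground ring and is exactly the fact the paper itself appeals to at the end of Section \ref{sec:formality}.
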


\begin{proof}
We showed in Corollary \ref{GMQEML} at the end of Section \ref{Gspectra} how $\GMQ$ is Quillen equivalent to the category of modules over the Eilenberg-MacLane ``ring spectrum of several objects'' 
$\leftmod \h \underline{A}$.

Let $\mathcal{X}$ consist of fibrant and cofibrant replacements of $\Phi(\sigma)$ where $\sigma \in \mathcal{G}_{top}$ runs over the generators of $\Ho(\GMQ)$. The set $\mathcal{X}$ is then a set of generators for $\Ho(\mathcal{C})$.
By Theorem \ref{moritatheorem} we have Quillen equivalences
\[
\GMQ \simeq_Q \leftmod\mathcal{E}_{top} \,\,\,\,\,\,\mbox{and} \,\,\,\,\,\, \mathcal{C} \simeq_Q \leftmod\mathcal{E}(\mathcal{X}).
\]
We are now going to compare $\leftmod\mathcal{E}_{top}$ to $\leftmod\mathcal{E}(\mathcal{X})$ by relating them both to  
$\leftmod \h \underline{A}$. Let $X_1$ be a cofibrant and fibrant replacement for $\Phi(\sigma_1)$ where $\sigma_1 \in \mathcal{G}_{top}$ and define $X_2$ analogously. Remember that
$\mathcal{E}(\mathcal{X})(X_1,X_2)$ was defined as the homomorphism spectrum of the fibrant replacement of the suspension spectrum of $X_1$ and $X_2$ in $\Sp(\mathcal{C})$ \cite[Definition 3.7.5]{SchShi03}, so
\[
\mathcal{E}(\mathcal{X})(X_1,X_2) = \Hom_{\Sp(\mathcal{C})}(\Sigma^\infty_{f} X_1, \Sigma^\infty_{f} X_2).
\]
By adjunction and using the Quillen equivalence $\Sigma^\infty: \mathcal{C}  \,\,\raisebox{-0.1\height}{$\overrightarrow{\longleftarrow}$}\,\, \Sp(\mathcal{C}): Ev_0$ we obtain
\[
[S^0, \mathcal{E}(\mathcal{X})(X_1,X_2)]^{\Sp}_* \cong [\Sigma^\infty X_1, \Sigma^\infty X_2]^{\Sp(\mathcal{C})}_*\cong [X_1, X_2]^{\mathcal{C}}_*.
\]
Via the equivalence $\Phi$ and again by adjunction this equals
\[
[\sigma_1,\sigma_2]^{\GMQ}_* \cong [S^0, \mathcal{E}_{top}(\sigma_1,\sigma_2)]^{\Sp}_*.
\]
Thus, $\mathcal{E}_{top}(\sigma_1,\sigma_2)$ and $\mathcal{E}(\mathcal{X})(X_1,X_2)$ have the same homotopy groups.
By Lemma \ref{homotopygroups}, these are concentrated in degree zero where they equal $\underline{A}(\sigma_1,\sigma_2)$. As the generators $X_i \in \mathcal{X}$ have been chosen to be fibrant and cofibrant, the spectra $\mathcal{E}(\mathcal{X})(X_1,X_2)$ are all fibrant in $\Sp$. Hence Theorem \ref{EMLmodules} applies, giving us a chain of Quillen equivalences between $\leftmod\mathcal{E}(\mathcal{X})$ and $\leftmod \h \underline{A}$.
Thus we have arrived at a collection of Quillen equivalences
\[
\GMQ \,\,\raisebox{-0.1\height}{$\overleftarrow{\longrightarrow}$}\,\,
\leftmod\mathcal{E}_{top} \,\,\simeq_Q\,\,
\leftmod \h \underline{A}  \,\,\simeq_Q\,\,
\leftmod\mathcal{E}(\mathcal{X})  \,\,
\raisebox{-0.1\height}{$\overrightarrow{\longleftarrow}$}\,\, \mathcal{C}
\]
which concludes our proof of the $G$-equivariant Rigidity Theorem.
\end{proof}

Hence we have presented a nontrivial example of rigidity that is not monogenic. It is a subject of further research whether rigidity also holds for $G$-spectra $G\Sp$ before rationalisation.

\section{\texorpdfstring{$S^1$}{S1}-equivariant rigidity}\label{sec:circlecase}

In the following part of this paper we are turning to the case 
where our group is $S^1=\mathbb{T}$, the fundamental example of an infinite compact Lie group.
In \cite{gre99}, John Greenlees constructed an abelian category $\mathcal{A}$ whose derived category $$D(\mathcal{A})=\Ho(\partial \mathcal{A})$$ is triangulated equivalent to $\Ho(\TMQ)$. In \cite{Shi02} Brooke Shipley used Morita theory to show that $D(\mathcal{A})$ and $\Ho(\TMQ)$ are in fact equivalent via a zig-zag of Quillen equivalences. We are going to see that Shipley's proof actually only depends on information which is encoded in the triangulated structure of $\Ho(\TMQ)$. Hence we will arrive at the conclusion that $\Ho(\TMQ)$ is indeed rigid.

\bigskip
Let us first turn to the algebraic model $\mathcal{A}$, which naturally has a grading, and the version with differentials, $\partial \mathcal{A}$. 

\begin{definition}
Let $\mathcal{F}$ be the set of finite subgroups of $\mathbb{T}$.
Let $\mathcal{O}_\mathcal{F}$ be the ring of operations 
$\prod_{H \in \mathcal{F}} \mathbb{Q} [c_H]$
with $c_H$ of degree $-2$. Let $e_H$ be the idempotent arising from 
projection onto factor $H$. 
In general, let $\phi$ be a subset of $\mathcal{F}$ and 
define $e_\phi$ to be the idempotent
coming from projection onto the factors in $\phi$. 
\end{definition}

We let $c$ be the sum of all elements $c_H$ for varying $H$. 
We can then write $c_H = e_H c$. 

\begin{definition}
For some function $v : \mathcal{F} \to \mathbb{Z}_{\geqslant 0}$ 
define $c^v \in \mathcal{O}_\mathcal{F}$ to be that element
with $e_H c^v = c_H^{v(H)}$. 
Let $\mathcal{E}$ be the set 
\[
\{ c^\nu \ | \ \nu : \mathcal{F} \to \mathbb{Z}_{\geqslant 0} \textrm{ with finite support}  \}.
\] 
We call this the set of \textbf{Euler classes}. 
\end{definition}
We form a new ring by formally adding inverses to the Euler classes, we call this ring 
$\mathcal{E}^{-1} \mathcal{O}_\mathcal{F}$. 
Note that this graded rational vector space is a ring, since the set of Euler classes is multiplicatively closed. 
To illustrate its structure, we see that 
as a graded vector space it is concentrated in even degrees, where  
\[
(\mathcal{E}^{-1} \mathcal{O}_\mathcal{F})_{2n} \cong \prod_{H \in \mathcal{F}} \mathbb{Q} \,\,\,\mbox{for}\,\,\,
n \leqslant 0 \,\,\,\mbox{and} \,\,\, 
(\mathcal{E}^{-1} \mathcal{O}_\mathcal{F})_{2n} \cong \oplus_{H \in \mathcal{F}} \mathbb{Q} \,\,\,\mbox{for}\,\,\, n >0.
\]

\begin{definition}
We define the category $\mathcal{A}$ to have object-class the 
collection of maps $$\beta : N \to \mathcal{E}^{-1} \mathcal{O}_\mathcal{F} \otimes V$$ of 
$\mathcal{O}_\mathcal{F}$-modules. Here, $V$ is a graded rational vector space, 
such that $\mathcal{E}^{-1} \beta$ is an isomorphism. 
The $\mathcal{O}_\mathcal{F}$-module $N$ is called the \emph{nub} and 
$V$ is called the \emph{vertex}.
A map $(\theta, \phi)$ in $\mathcal{A}$ is a commutative square 
$$
\xymatrix@C+1cm{
N \ar[r]^(0.4)\beta \ar[d]_\theta & 
\mathcal{E}^{-1} \mathcal{O}_\mathcal{F} \otimes V \ar[d]^{id \otimes \phi} \\
N' \ar[r]^(0.4){\beta'} &
\mathcal{E}^{-1} \mathcal{O}_\mathcal{F} \otimes V'
}$$
where $\theta$ is a map of $\mathcal{O}_\mathcal{F}$-modules and 
$\phi$ is a map of graded rational vector spaces. 
\end{definition}

If we think of $\mathcal{O}_\mathcal{F}$ as a chain complex with trivial 
differential, then we can consider the category of 
$\mathcal{O}_\mathcal{F}$-modules in $\Ch(\mathbb{Q})$. 
Such an object $N$ is an 
$\mathcal{O}_\mathcal{F}$-module in graded vector spaces along with maps 
$\partial_n : N_n \to N_{n-1}$. These maps
satisfy the relations
\[
\partial_{n-1} \circ \partial_n =0 
\quad
c \partial_n = \partial_{n-2} c.
\]

\begin{definition}
The category $\partial \mathcal{A}$ has object-class the 
collection of maps 
\[
\beta : N \to \mathcal{E}^{-1}\mathcal{O}_\mathcal{F} \otimes V
\]
of $\mathcal{O}_\mathcal{F}$-modules in $\Ch(\mathbb{Q})$. Here, 
$N$ is a rational chain complex with an action of $\mathcal{O}_\mathcal{F}$, 
$V$ is a rational chain complex and 
$\mathcal{E}^{-1} \beta$ is an isomorphism. 
A map $(\theta, \phi)$ is then a commutative square as before, such that 
both $\theta$ is a map in the category  
$\mathcal{O}_\mathcal{F}$-modules in $\Ch(\mathbb{Q})$ 
and $\phi$ is a map of $\Ch(\mathbb{Q})$. 
\end{definition}

This category has a set of compact generators, the so-called 
\emph{basic algebraic cells}, see \cite[Subsection 5.8]{gre99},
$$\mathcal{B}_a=\{ I_H\}_{H \leq \mathbb{T}}.$$ 
The $I_H$ are fibrant replacements of the image of the ``geometric basic cells'' in $\TMQ$ under Greenlees' triangulated equivalence, 
see \cite[Proposition 2.9]{Shi02}.  
We do not need to know the construction of those cells, only the properties
exploited in the work of Shipley. Hence, we do not give any further details of their construction.
By Theorem \ref{thm:rationalmorita} we know
\[
\partial \mathcal{A} \simeq_Q \leftmod\mathcal{E}_\mathbb{Q}(\mathcal{B}_a).
\]
However, the dga of several objects $\mathcal{E}_\mathbb{Q}(\mathcal{B}_a)$ is quite large and difficult to handle. Shipley proceeds to construct a smaller, quasi-isomorphic category $\mathcal{E}_a$ by first taking the $-1$-connected cover $\mathcal{E}_\mathbb{Q}(\mathcal{B}_a)\left<0\right>$ and manually killing some of its generators. Finally, it is shown that $\mathcal{E}_a$ is quasi-isomorphic to an even smaller dga $\mathcal{S}$ whose generators, differentials and multiplicative structure are specified in \cite[Definition 6.3]{Shi02}. Hence,
\[
\partial \mathcal{A} \simeq_Q \leftmod\mathcal{E}_a 
\simeq_Q \leftmod\mathcal{S}.
\]
Shipley computed the relevant homology data in \cite[Proposition 4.9 and Proposition 6.4]{Shi02}.

\begin{proposition}[Shipley]\label{prop:homology}
The homology of $\mathcal{E}(\mathcal{B}_a)$ (and hence of $\mathcal{E}_a$ and $\mathcal{S}$) is given by
\begin{enumerate}
\item $\h_*\mathcal{E}(\mathcal{B}_a)(I_H,I_H)= \mathbb{Q}[0]\oplus\mathbb{Q}[1]$ with generators $[id_H]$ and $[m_1^H]$,
\item $\h_*\mathcal{E}(\mathcal{B}_a)(I_H,I_K)= 0$,
\item $\h_*\mathcal{E}(\mathcal{B}_a)(I_H,I_\mathbb{T})= \mathbb{Q}[0]$ with generator $[f_0^H]$,
\item $\h_*\mathcal{E}(\mathcal{B}_a)(I_\mathbb{T},I_H)= \mathbb{Q}[1]$ with generator $[\tilde{g}_1^H]$,
\item $\h_*\mathcal{E}(\mathcal{B}_a)(I_\mathbb{T},I_\mathbb{T})=(\oplus_{n \ge 0} \mathbb{Q}\mathcal{F}[2n+1])\oplus \mathbb{Q}[0]$ with generators $[i_{2n+1}^H]$ and $[id_\mathbb{T}]$.
\end{enumerate}
The nontrivial products and Massey products are
\begin{enumerate}
\item $[\tilde{g}_1^H][f_0^h]=[m_1^H]$
\item $[f_0^H][\tilde{g}_1^H]=[i_1^H]$
\item $\left< [f_0^H], [m_1^H],...,[m_1^H],[\tilde{g}_1^H]\right>= \{ [-i^H_{2n+1}] \}$ (where $[m^H_1]$ occurs $n$ times).
\end{enumerate}
\end{proposition}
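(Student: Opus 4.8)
The plan is to compute the homology of $\mathcal{E}(\mathcal{B}_a)$ directly from Greenlees' construction of the basic cells in $\partial\mathcal{A}$, rather than from the intrinsically huge dga $\mathcal{E}_\mathbb{Q}(\mathcal{B}_a)$ itself. The key observation is that, by Theorem~\ref{thm:rationalmorita}, there is a specified isomorphism of graded-abelian-group-enriched categories between $\h_* \mathcal{E}_\mathbb{Q}(\mathcal{B}_a)$ and $\pi_* \mathcal{E}(\mathcal{B}_a)$; and via the chain of Quillen equivalences $\partial\mathcal{A}\simeq_Q\leftmod\mathcal{E}_\mathbb{Q}(\mathcal{B}_a)$, the latter is computed by the graded morphism groups $[I_H, I_K]^{\partial\mathcal{A}}_*$ in the triangulated category $D(\mathcal{A})$. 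So first I would identify, for each pair of subgroups $H,K\leq\mathbb{T}$, the object $I_H$ explicitly as a chain complex in $\partial\mathcal{A}$ (nub, vertex and structure map $\beta$), and then resolve $I_H$ cofibrantly so that $[I_H,I_K]^{\partial\mathcal{A}}_n = \h_n \Hom_{\partial\mathcal{A}}(I_H, I_K)$ can be read off. Cases (1)--(5) of the first list then become a finite bookkeeping exercise in $\mathcal{O}_\mathcal{F}$-module homomorphisms: the vanishing in case (2) reflects the idempotent splitting (distinct finite subgroups $H\neq K$ are separated by orthogonal idempotents $e_H, e_K$), the $\mathbb{Q}[1]$ classes come from the degree-shift inherent in the Euler class $c_H$ of degree $-2$ together with the one-dimensional nub/vertex mismatch, and the big factor $\oplus_{n\geq 0}\mathbb{Q}\mathcal{F}[2n+1]$ in case (5) comes from the tower of Euler classes acting on the $\mathbb{T}$-cell.

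Next I would pin down the multiplicative and $A_\infty$ structure. The products $[\tilde g_1^H][f_0^H]=[m_1^H]$ and $[f_0^H][\tilde g_1^H]=[i_1^H]$ are computed by composing the explicit chain maps representing $f_0^H\colon I_H\to I_\mathbb{T}$ and $\tilde g_1^H\colon I_\mathbb{T}\to I_H$ (these are essentially the inclusion/collapse maps relating the $H$-cell and the $\mathbb{T}$-cell), and checking that the two composites land on the claimed generators of $\h_*\mathcal{E}(\mathcal{B}_a)(I_H,I_H)$ and $\h_*\mathcal{E}(\mathcal{B}_a)(I_\mathbb{T},I_\mathbb{T})$ respectively. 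The Massey product statement $\langle [f_0^H],[m_1^H],\dots,[m_1^H],[\tilde g_1^H]\rangle = \{[-i^H_{2n+1}]\}$ is the genuinely delicate part: I would compute it by choosing explicit null-homotopies for the consecutive products (using that $[m_1^H]^2=0$ for degree reasons and that $[f_0^H][m_1^H]=0$, $[m_1^H][\tilde g_1^H]=0$ as these land in groups that vanish in the relevant degree), assembling the defining cocycle of the $(n+1)$-fold Massey product, and identifying its class. The sign and the appearance of the \emph{full} tower (one class $i^H_{2n+1}$ for each $n$, indexed by the length of the bracket) should fall out of the fact that iterating $m_1^H$ corresponds to multiplying by successive powers of the Euler class $c_H$.

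The main obstacle I expect is the Massey product computation: Massey products are only defined up to indeterminacy, and one must verify both that the indeterminacy vanishes (so the bracket is the single element $\{[-i^H_{2n+1}]\}$ rather than a coset) and that the sign is exactly $-1$. Getting this right requires a careful and consistent choice of cofibrant model for each $I_H$ and of the null-homotopies, and tracking signs through the bar-type formula for higher Massey products; this is where I would be most careful, and it is presumably why Shipley records this as a separate nontrivial computation in \cite[Proposition 4.9 and Proposition 6.4]{Shi02}. Everything else reduces to linear algebra over $\mathbb{Q}$ with the $\mathcal{O}_\mathcal{F}$-action bookkeeping, which is routine once the cells are written down explicitly.
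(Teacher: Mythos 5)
This proposition is not proved in the paper at all: it is imported verbatim from Shipley, and the paper's ``proof'' is the citation to \cite[Propositions 4.9 and 6.4]{Shi02}. So the relevant comparison is with Shipley's computation, and your outline is broadly the right strategy (explicit algebraic cells, chain-level products, then the higher structure), but as it stands it is a plan rather than a proof: every step that carries the actual content of the statement is deferred. Concretely, you never identify the objects $I_H$ in $\partial\mathcal{A}$ (they are only defined as fibrant replacements of the images of the geometric basic cells under Greenlees' equivalence, so ``writing them down'' already requires Greenlees' identification of the algebraic models of $\Sigma^\infty \mathbb{T}/H_+$), you never exhibit the chain maps representing $f_0^H$ and $\tilde g_1^H$, and the Massey product computation --- which is precisely the nontrivial part of the proposition, including the sign and the assertion that the bracket has no indeterminacy --- is explicitly left as ``where I would be most careful.'' Flagging the hard step is not the same as doing it; since the whole point of recording this proposition is that the products and Massey products pin down $\mathcal{E}(\mathcal{B}_a)$ up to quasi-isomorphism, a proof that stops short of them has a genuine gap.

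It is also worth noting that the route you sketch (brute-force cofibrant resolution of the Hom-complexes and hand-assembled null-homotopies) is not how the cited computation is made feasible. The additive and multiplicative structure of $[I_H,I_K]^{D(\mathcal{A})}_*$ is obtained from known calculations of rational $\mathbb{T}$-equivariant homotopy groups together with Greenlees' structural results on $\mathcal{A}$ (in particular its injective dimension one, which gives a short exact sequence computing morphisms in $D(\mathcal{A})$ from $\Hom$ and $\mathrm{Ext}$ in $\mathcal{A}$), rather than from raw resolutions; and the higher structure is controlled by first passing to the reduced models ($-1$-connected covers, then $\mathcal{E}_a$, then the small explicit dga $\mathcal{S}$ of \cite[Definition 6.3]{Shi02}), where the Massey products can be computed at chain level and transported back along quasi-isomorphisms, which preserve them. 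Without some such reduction, or without actually carrying out the bar-type computation you allude to, the claims $\langle [f_0^H],[m_1^H],\dots,[m_1^H],[\tilde g_1^H]\rangle=\{[-i^H_{2n+1}]\}$ (value, sign, and vanishing indeterminacy) remain unverified, so the proposal cannot yet be accepted as a proof of the proposition.
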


Let us now turn to the other side of the Quillen equivalence.
One again shows that $\mathcal{E}(\mathcal{B}_t)$ is quasi-isomorphic to a more convenient  dga with many objects $\mathcal{E}_t$, which in turn is quasi-isomorphic to $\mathcal{S}$. Hence,
\[
\partial\mathcal{A} \simeq_Q \leftmod\mathcal{S} \simeq_Q \leftmod\mathcal{E}(\mathcal{B}_t) \simeq_Q \TMQ.
\]
We are going to follow the proof for the topological half of this equivalence for an arbitrary model $\mathcal{C}$ of $\Ho(\partial\mathcal{A})$ and see that we can apply the very same steps, which are 
\begin{itemize}
\item $\mathcal{E}(\mathcal{B}_t)$ is quasi-isomorphic to a dga with many objects $\mathcal{E}_t$ with specific nicer properties
\item $\mathcal{E}_t$ is quasi-isomorphic to $\mathcal{S}$.
\end{itemize}

\bigskip
To be more precise, let
\[
\Phi: \Ho(\partial \mathcal{A}) \longrightarrow \Ho(\mathcal{C})
\]
be an equivalence of triangulated categories. We obtain a set of small fibrant-cofibrant generators $\mathcal{X}$ of $\Ho(\mathcal{C})$ by taking fibrant-cofibrant replacements of $\Phi(\mathcal{B}_a)$. 

\bigskip
The homology data of $\mathcal{E}(\mathcal{X})$ is the same as for $\mathcal{E}(\mathcal{B}_a)$: \cite[Proposition 3.3]{Shi02} says that for fibrant and cofibrant $X,Y \in \mathcal{C}$,
\[
\h_*\Hom_{\mathcal{C}}(X,Y) \cong [X,Y]^{\Ho(\mathcal{C}))}
\]
as graded abelian groups and thus,
\[
\h_*\Hom_\mathcal{C}(\Phi(I),\Phi(J)) \cong [\Phi(I),\Phi(J)]^{\Ho(\mathcal{C}))} \cong [I,J]^{\Ho(dg\mathcal{A})} \cong \h_*\mathcal{E}(\mathcal{B}_a)(I,J).
\]
Since $\Phi$ is a functor, it preserves composition. Hence, the product structures in both homologies agree. Further, by \cite[Theorem A.3]{Shi02}, triangulated equivalences preserve all Toda brackets. Summarising this, we get the same result as earlier. Write $X_H$ for a fibrant-cofibrant replacement of $I_H$. 
\begin{enumerate}
\item $\h_*\mathcal{E}(\mathcal{X})(X_H,X_H)= \mathbb{Q}[0]\oplus\mathbb{Q}[1]$ with generators $[id_H]$ and $[M_1^H]$,
\item $\h_*\mathcal{E}(\mathcal{X})(X_H,X_K)= 0$,
\item $\h_*\mathcal{E}(\mathcal{X})(X_H,X_\mathbb{T})= \mathbb{Q}[0]$ with generator $[F_0^H]$,
\item $\h_*\mathcal{E}(\mathcal{X})(X_\mathbb{T},X_H)= \mathbb{Q}[1]$ with generator $[\tilde{G}_1^H]$,
\item $\h_*\mathcal{E}(\mathcal{X})(X_\mathbb{T},X_\mathbb{T})=(\oplus_{n \ge 0} \mathbb{Q}\mathcal{F}[2n+1])\oplus \mathbb{Q}[0]$ with generators $[T_{2n+1}^H]$ and $[id_\mathbb{T}]$.
\end{enumerate}
The nontrivial products and Massey products are
\begin{enumerate}
\item $[\tilde{G}_1^H][F_0^h]=[M_1^H]$
\item $[F_0^H][\tilde{G}_1^H]=[T_1^H]$
\item $\left< [F_0^H], [M_1^H],...,[M_1^H],[\tilde{G}_1^H]\right>= \{ [-T^H_{2n+1}] \}$ (where $[M^H_1]$ occurs $n$ times).
\end{enumerate}
Since the homology of $\mathcal{E}(\mathcal{X})$ is concentrated in non-negative degrees, we see that $\mathcal{E}(\mathcal{X})$ is quasi-isomorphic to its $-1$-connected cover. Using Postnikov approximations analogous to \cite[Proposition 5.7]{Shi02} we can modify this cover to obtain a dga of several objects $\mathcal{E}_X$ that is quasi-isomorphic to $\mathcal{E}(\mathcal{X})$, $(\mathcal{E}_X)_0 \cong H_0(\mathcal{E}_X)$, $(\mathcal{E}_X)_n=0$ for $n<0$, $\mathcal{E}_X(H,K)=0$ and $\mathcal{E}_X(H,\mathbb{T})$ is concentrated in degree zero. This will give us a technical advantage when constructing the quasi-isomorphism to $\mathcal{S}$. 

\begin{theorem}
The dgas with several objects $\mathcal{E}_X$ and $\mathcal{S}$ are quasi-isomorphic.
\end{theorem}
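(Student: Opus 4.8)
The plan is to mimic Shipley's construction of the quasi-isomorphism $\mathcal{E}_t \simeq_Q \mathcal{S}$ (equivalently, the algebraic-side statement $\mathcal{E}_a \simeq_Q \mathcal{S}$) from \cite{Shi02}, replacing $\mathcal{E}_t$ by $\mathcal{E}_X$ line for line. The essential observation is that her argument consumes only two kinds of input: (i) the graded homology ring $\h_*\mathcal{E}_X$ together with its product structure and the Toda/Massey brackets $\langle [F_0^H],[M_1^H],\dots,[M_1^H],[\tilde G_1^H]\rangle$; and (ii) certain structural normalisations of $\mathcal{E}_X$ as a dga with several objects. Ingredient (i) we have already pinned down: because $\Phi$ is a triangulated equivalence it preserves composition and, by \cite[Theorem A.3]{Shi02}, Toda brackets, so the homology data of $\mathcal{E}(\mathcal{X})$ coincides with that of $\mathcal{E}(\mathcal{B}_a)$ recorded in Proposition~\ref{prop:homology}, and passing to the Postnikov model $\mathcal{E}_X$ changes neither homology nor brackets. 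Ingredient (ii) was arranged in the paragraph preceding the statement: $(\mathcal{E}_X)_n=0$ for $n<0$, $(\mathcal{E}_X)_0\cong\h_0\mathcal{E}_X$, $\mathcal{E}_X(X_H,X_K)=0$ for distinct finite $H,K$, and $\mathcal{E}_X(X_H,X_\mathbb{T})$ is concentrated in degree zero --- which are precisely the hypotheses under which Shipley manufactures the comparison with the standard-form dga $\mathcal{S}$ of \cite[Definition 6.3]{Shi02}.

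Concretely I would proceed as follows. First dispatch the ``small'' morphism complexes: $\mathcal{E}_X(X_H,X_K)=0=\mathcal{S}(X_H,X_K)$ needs no argument; $\mathcal{E}_X(X_H,X_\mathbb{T})$ and $\mathcal{S}(X_H,X_\mathbb{T})$ are both concentrated in degree zero with one-dimensional homology, so picking a cycle over the generator $[F_0^H]$ already gives an isomorphism; and $\mathcal{E}_X(X_H,X_H)$ is a connective dga whose homology is the exterior algebra on a class in degree one, so all its potential higher operations land where $\h_*$ vanishes, making it formal and hence quasi-isomorphic to $\mathcal{S}(X_H,X_H)$. The real work is the complex $\mathcal{E}_X(X_\mathbb{T},X_\mathbb{T})$ and its bimodule structure over the $\mathcal{E}_X(X_H,X_H)$ through $\mathcal{E}_X(X_H,X_\mathbb{T})$ and $\mathcal{E}_X(X_\mathbb{T},X_H)$. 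Here one chooses cycles representing $[F_0^H]$ and $[\tilde G_1^H]$, notes that their product realises $[T_1^H]$, and then inductively on $n$ chooses cycles $t^H_{2n+1}$ representing $[T^H_{2n+1}]$ together with a strictified defining system for the $(n+2)$-fold Massey product, which is possible precisely because that bracket equals $\{-[T^H_{2n+1}]\}$. These choices assemble into a quasi-isomorphism relating $\mathcal{S}$ and $\mathcal{E}_X$ (a direct dga map in whichever direction is convenient, or a short zig-zag through a common cofibrant model, exactly as in \cite{Shi02}) matching each generator of $\mathcal{S}$ from \cite[Definition 6.3]{Shi02} with its chosen representative; the differentials and products of $\mathcal{S}$ are then respected by Shipley's computation, and on homology one recovers the identification of Proposition~\ref{prop:homology}, so the map is a quasi-isomorphism.

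The main obstacle is the inductive, coherent bookkeeping of the infinite family of generators $\{[T^H_{2n+1}]\}_{n\ge 0,\,H\in\mathcal{F}}$: one must pick the representing cycles simultaneously for all $n$ so that the strictified defining system for the $n$-fold bracket delivers $t^H_{2n+1}$ up to sign and up to a boundary that can be absorbed into the next stage, without ever violating a relation of $\mathcal{S}$. It is here that the normal form of $\mathcal{E}_X$ earns its keep --- for instance, $\mathcal{E}_X(X_H,X_\mathbb{T})$ being concentrated in degree zero removes the slack in the middle entries of the defining systems --- and here that one needs the bracket data transported along $\Phi$ to match Proposition~\ref{prop:homology} on the nose rather than merely modulo indeterminacy. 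Since Shipley's proof for $\mathcal{E}_t$ already threads this needle and uses nothing about $\mathcal{E}_t$ beyond ingredients (i) and (ii), which we have secured for $\mathcal{E}_X$, the task reduces to verifying that each of her steps is insensitive to the substitution of $\mathcal{E}_X$ for $\mathcal{E}_t$.
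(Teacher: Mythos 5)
Your proposal follows essentially the same route as the paper: invoke Shipley's proof of \cite[Proposition 6.1]{Shi02}, observing that it only consumes the homology, products and Massey brackets (transported along $\Phi$ and matching Proposition~\ref{prop:homology}) together with the normal form of $\mathcal{E}_X$ arranged beforehand, and then build the map by sending generators of $\mathcal{S}$ to representing cycles and arguing inductively. Your additional bookkeeping about the individual morphism complexes and defining systems is just a more explicit rendering of the same argument, so the proposal is correct and matches the paper's proof.
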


\begin{proof}
The proof is identical to the proof of \cite[Proposition 6.1]{Shi02} for the case $\mathcal{E}_t$, so we are only mentioning very little here. Shipley's proof only relies on combinatorics of properties preserved by the triangulated structure, mostly homology, multiplicative structure and Massey products.

The first step is to choose images for the generators of $\mathcal{S}$ This is done by sending a generator of $\mathcal{S}$ to a cycle in $\mathcal{E}_X$ representing a homology class in the right degree. Using multiplicative structure and Massey products, Shipley then proves by induction that this is indeed a well-defined homomorphism of dgas of many objects which induces an isomorphism in homology.
\end{proof}

\begin{corollary}
Let $\mathcal{C}$ be a stable model category with
\[
\Phi: \Ho(\partial \mathcal{A}) \longrightarrow \Ho(\mathcal{C})
\]
an equivalence of triangulated categories. Then $\mathcal{C}$ and $\partial \mathcal{A}$ (and hence $\TMQ$) are Quillen equivalent. Thus, $\Ho(\TMQ)$ is rigid.
\end{corollary}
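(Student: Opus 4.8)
The plan is to assemble the chain of Quillen equivalences that has been set up in the preceding material, running along the algebraic side, through the small models $\mathcal{S}$ and $\mathcal{E}_X$, and back out to $\mathcal{C}$. Concretely, I would proceed as follows. First, recall from Theorem \ref{thm:rationalmorita} that any cofibrantly generated, simplicial, proper, rational, stable model category with a set of compact generators is Quillen equivalent to modules over a $\Ch(\mathbb{Q})$-category built from the endomorphism category of those generators; this applies to $\partial\mathcal{A}$ with the basic algebraic cells $\mathcal{B}_a$, and also to $\mathcal{C}$ with the generators $\mathcal{X}=\Phi(\mathcal{B}_a)$, provided one first checks that $\mathcal{C}$ is rational — which it is, since $\Phi$ is a triangulated equivalence and $\Ho(\partial\mathcal{A})$ has rational Hom-groups, so $[X,Y]^{\mathcal{C}}_* \cong [I,J]^{\partial\mathcal{A}}_*$ is rational for $X,Y$ the images of generators, and hence (as these generate) for all $X,Y$.

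Next I would invoke the two quasi-isomorphism results already in place: $\mathcal{E}_\mathbb{Q}(\mathcal{B}_a) \simeq_Q \leftmod\mathcal{E}_a \simeq_Q \leftmod\mathcal{S}$ from Shipley's work, and the theorem just proved that $\mathcal{E}_X$ (the Postnikov-modified $-1$-connected cover of $\mathcal{E}(\mathcal{X})$) is quasi-isomorphic to $\mathcal{S}$. Since a quasi-isomorphism of dgas with several objects induces a Quillen equivalence on module categories, stringing these together gives
\[
\partial\mathcal{A} \simeq_Q \leftmod\mathcal{E}_\mathbb{Q}(\mathcal{B}_a) \simeq_Q \leftmod\mathcal{S} \simeq_Q \leftmod\mathcal{E}_X \simeq_Q \leftmod\mathcal{E}_\mathbb{Q}(\mathcal{X}) \simeq_Q \mathcal{C},
\]
where the last step is again Theorem \ref{thm:rationalmorita} applied to $\mathcal{C}$, and the middle steps use that passing to a $-1$-connected cover and then a Postnikov-type model is a zig-zag of quasi-isomorphisms. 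Finally, since $\partial\mathcal{A} \simeq_Q \TMQ$ is already known (via $\leftmod\mathcal{E}(\mathcal{B}_t)$), composing yields $\mathcal{C} \simeq_Q \TMQ$, which is exactly the assertion that $\Ho(\TMQ)$ is rigid.

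The genuinely new content is the theorem comparing $\mathcal{E}_X$ and $\mathcal{S}$, which has been handled by importing Shipley's argument verbatim; the corollary itself is then bookkeeping. The main thing to be careful about — and the place where one must not simply wave hands — is verifying that every input hypothesis survives transport along $\Phi$: that $\mathcal{C}$ inherits the "cofibrantly generated, simplicial, proper, stable, rational" package needed for Theorem \ref{thm:rationalmorita}, that the generators can genuinely be taken fibrant and cofibrant so that $\Hom_{\mathcal{C}}(X,Y)$ computes the right homology (via \cite[Proposition 3.3]{Shi02}), and that the identification of homology, products, and Massey/Toda products in $\mathcal{E}(\mathcal{X})$ with those in $\mathcal{E}(\mathcal{B}_a)$ is exactly what the $\mathcal{E}_X \simeq_Q \mathcal{S}$ comparison consumes — using that triangulated equivalences preserve Toda brackets \cite[Theorem A.3]{Shi02}. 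Once those checks are in hand, the corollary follows immediately by concatenating the displayed chain of Quillen equivalences.
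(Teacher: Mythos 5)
Your proposal is correct and follows essentially the same route as the paper: the corollary is indeed just the concatenation of the chain $\mathcal{C} \simeq_Q \leftmod\mathcal{E}_\mathbb{Q}(\mathcal{X}) \simeq_Q \leftmod\mathcal{E}_X \simeq_Q \leftmod\mathcal{S} \simeq_Q \partial\mathcal{A} \simeq_Q \TMQ$ already assembled in the section, with the new content being the comparison of $\mathcal{E}_X$ with $\mathcal{S}$ via the transported homology, products and Toda/Massey data. Your added checks (that $\mathcal{C}$ carries the standing hypotheses so that Theorem \ref{thm:rationalmorita} applies, that rationality is inherited through $\Phi$, and that the generators are taken fibrant--cofibrant) are exactly the implicit bookkeeping the paper relies on.
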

\qed

The question of whether the rational $G$-equivariant stable homotopy category is rigid
for other for other compact Lie groups remains open. The above method would be 
hard to generalise as the number of Massey products increases rapidly as the rank of the group increases. 
One method to produce rigidity statements for general $G$ is discussed in the following section.

\section{Formality and Rigidity}\label{sec:formality}

We now turn to relating rigidity of triangulated categories to 
formality of differential graded algebras. 
We then examine how rigidity for finite and profinite groups
comes from a version of formality, whereas the proof
of rigidity for $S^1$ cannot be placed in this this framework. 
We end this section with another example 
of how intrinsic formality implies rigidity.

Consider a monogenic rational stable model category, $\mathcal{C}$, 
with $X$ a compact generator.
Then by the techniques above, $\mathcal{C}$ is Quillen equivalent to the category of 
modules of a differential graded algebra $A$, where the homology of $A$
is isomorphic to $\Ho(\mathcal{C})(X,X)$. 
Hence all of the homotopical information of  
$\mathcal{C}$ is contained in $A$.  
So we can ask how much of this information is 
contained in the homology of $A$?
This is analogous to asking whether $A$ can be recovered, 
up to quasi-isomorphism, from its homology. 
This is the question of whether $A$ is formal, which we define below.

\begin{definition}
A differential graded algebra $A$ is \textbf{formal} 
if it is quasi-isomorphic to its homology, 
regarded as a dga with trivial differential. 
A dga $B$ is said to be \textbf{intrinsically formal} if any other 
dga $C$ with $\h_*(B) \cong \h_*(C)$ is quasi-isomorphic to $B$. 
\end{definition}

Clearly, if a dga is intrinsically formal, then it is also formal, 
but the converse is not true. Note that any dga with a non-trivial 
Massey product cannot be formal. These notions were originally introduced
in the context of rational homotopy theory, see \cite{Sull77} or \cite{FHT01}. 
However they have also been of use in more algebraic settings, \cite{RW11}.

We can rephrase intrinsic formality as follows. A graded algebra $R$ is 
intrinsically formal if whenever a dga $B$ has $\h_*(B) \cong R$, 
then $R$ is quasi-isomorphic to $B$. Hence intrinsic formality
is really a property of graded algebras, whereas formality is 
a property of dgas.

We can adjust this to the commutative setting, but here we must take care
to make sure that we only consider zig-zag of quasi-isomorphisms that pass through commutative dgas at all stages. 

\begin{definition}
A commutative differential graded algebra $A$ is \textbf{formal as a commutative dga}, if it is quasi-isomorphic, as a commutative dga, to its homology regarded as a dga with trivial differential. 
A commutative dga $B$ is \textbf{intrinsically formal as a commutative dga} if 
any other commutative dga $C$ with $\h_*(B) \cong \h_*(C)$ is quasi-isomorphic to $B$ as a commutative dga. 
\end{definition}

We provide some of examples of intrinsic formality.
In each case our ground ring will be a field $k$
of characteristic zero. 

\begin{ex}
Any graded algebra with homology concentrated in degree 
zero is intrinsically formal. Let $A$ be such a dga and let  $C_0 A$ be its  
$(-1)$-connective cover. This dga is defined 
as follows, $(C_0 A)_k$ is zero for $k <0$, $A_k$
for $k >0$ and for $k=0$ is the zero-cycles of $A$. 
There inclusion $C_0 A \to A$ 
is a quasi-isomorphism. 
We also have a quotient map 
$C_0 A \to \h_* (A)$ which in degree $0$
sends the cycle $z$ to the homology class of $[z]$, 
in all other degrees it is the zero map.
\end{ex}

\begin{ex}
Any free graded algebra is intrinsically formal. 
Let $A$ be a dga whose homology is isomorphic to the 
free associative algebra on generators $x_i$. 
Then by choosing cycle representatives 
for each $x_i$, we obtain a quasi-isomorphism
$\h_* A \to A$.
\end{ex}

\begin{ex}
Products of associative algebras preserve intrinsic formality. 
This follows since for any dga $A$, $A \times -$ preserves quasi-isomorphisms.
\end{ex}

Now we turn our attention to the commutative case. Our first example
follow by the same arguments as above. 

\begin{ex}
Any commutative graded algebra with homology in degree zero is intrinsically formal. 
Any free commutative graded algebra is intrinsically formal. 
\end{ex}

\begin{ex}
Products preserve intrinsic formality of commutative dgas, as do tensor products over $k$. 
Both of these statements follow from the fact that for any dga $A$, both 
$A \times -$ and $A \otimes -$ preserve quasi-isomorphisms.
\end{ex}

\begin{ex}
Let $R= k[x_1, \dots x_n]$, then $R$ modulo
a regular sequence $(r_1, \dots r_m)$ is intrinsically formal.
Let $A$ be a commutative dga, with $R \cong \h_*(A)$.
Then by choosing cycles representatives for the $x_i$,
we have a map $f : k[x_1, \dots x_n] \to A$. 

Consider the commutative dga $R_m$, the underlying algebra of which 
takes form 
$k[x_1, \dots x_n] \otimes \Lambda[y_1, \dots y_m]$, with differential defined by 
$\partial y_i = r_i$ and $\partial x_i = 0$.  
Since 
the sequence is regular, $\h_*(R_m) =  R/(r_1, \dots r_m)$.
We define $f_m : R_m \to A$ by sending
$x_i$ to $f(x_i)$ and $y_i$ to $f(r_i)$. 
We define quasi-isomorphisms $g_m : R_m \to R/(r_1, \dots r_m)$ by sending
$x_i$ to $x_i$ and $y_i$ to zero.  
\end{ex}

We now describe the relation between the formality and rigidity,
 which is most easily seen when we work rationally. Consider a 
cofibrantly generated, proper, simplicial, rational stable model category $\mathcal{C}$ with a single compact generator $X$. Then by 
the results of \cite{shi07} (see Theorem \ref{thm:rationalmorita}),  
$\mathcal{C}$ is Quillen equivalent to $\mathcal{E}_\mathbb{Q}(X)$-modules 
in rational chain complexes for some dga 
$\mathcal{E}_\mathbb{Q}(X)$ satisfying
\[
\h_*(\mathcal{E}_\mathbb{Q}(X)) \cong [X,X]^\mathcal{C}_* \cong \pi_* \mathcal{E}(X)
\]
Note that if $\mathcal{E}(X)$
is commutative, then we can also assume that 
$\mathcal{E}_\mathbb{Q}(X)$ is a commutative dga. 
This happens for example when $X$ is the unit of a monoidal model category.
In a related context, the work of \cite{GS11} fundamentally depends upon 
commutativity of terms like $[X,X]^\mathcal{C}_*$.

If $\mathcal{E}_\mathbb{Q}(X)$ is formal, then we would know that 
$\mathcal{C}$ is Quillen equivalent to $\pi_*(\mathcal{E}(X))$-modules 
in rational chain complexes. But we have little control over the dga $\mathcal{E}_\mathbb{Q}(X)$. We only have a good understanding of its homology. 
So we assume a stronger condition, that it is intrinsically formal 
(or rather, that the graded algebra $[X,X]_*^{\mathcal{C}}$ is intrinsically formal).
We thus achieve the same conclusion, that
$\mathcal{C}$ is Quillen equivalent to $\pi_*(\mathcal{E}(X))$-modules, 
but the assumptions are 
considerably easier to verify.

To link this to rigidity, let $\mathcal{D}$ be another 
cofibrantly generated, proper, simplicial, stable model category with 
$\Ho(\mathcal{D})$ triangulated equivalent to $\Ho(\mathcal{C})$. Then 
$\mathcal{D}$ is also necessarily rational and it must have a generator
$Y$, which is also compact. Furthermore $[Y,Y]^\mathcal{D}_* \cong [X,X]^\mathcal{C}_*$
as dgas and so $[Y,Y]^\mathcal{D}_*$ is intrinsically formal. 
Hence, $\mathcal{D}$ is Quillen equivalent to 
$[Y,Y]^\mathcal{D}_*$-modules in rational chain complexes. 
A quasi-isomorphism of differential graded algebras induces
a Quillen equivalence on categories of modules, so it follows that
$\mathcal{C}$ and $\mathcal{D}$ are Quillen 
equivalent. We have thus proven that $\mathcal{C}$ is rigid. 
We can think of this as the statement that 
intrinsic formality implies rigidity. 

\begin{theorem}
Let $\mathcal{C}$ be a rational, monogenic stable 
model category, that is also 
proper, cofibrantly generated and simplicial.
Let $X$ be a generator and assume that the graded ring
$\Ho(\mathcal{C})(X,X)$ is intrinsically formal. 
Then if $\mathcal{D}$ is another stable model category
that is also proper, cofibrantly generated and simplicial, 
such that $\Ho(\mathcal{C})$ and $\Ho(\mathcal{D})$
are triangulated equivalent. 
Then $\mathcal{C}$ and $\mathcal{D}$ are Quillen equivalent. 
\end{theorem}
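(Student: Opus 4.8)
The plan is to present both $\mathcal{C}$ and $\mathcal{D}$ as module categories over differential graded algebras with isomorphic homology via Theorem~\ref{thm:rationalmorita}, and then to feed intrinsic formality into a comparison of these two dgas. First I would apply Theorem~\ref{thm:rationalmorita} to $\mathcal{C}$ with its single compact generator $X$, which we may take to be fibrant and cofibrant. This yields a zig-zag of Quillen equivalences $\mathcal{C} \simeq_Q \leftmod \mathcal{E}_\mathbb{Q}(X)$ together with a specified isomorphism of graded rings
\[
\h_* \mathcal{E}_\mathbb{Q}(X) \cong \pi_* \mathcal{E}(X) \cong [X,X]^\mathcal{C}_* = \Ho(\mathcal{C})(X,X),
\]
where the first isomorphism is the ``enriched in graded abelian groups'' clause of Theorem~\ref{thm:rationalmorita}, which in the monogenic case is precisely an isomorphism of graded rings, and the second comes from the Quillen equivalence of Theorem~\ref{spectralreplacement}, identifying composition in $\pi_* \Hom$ with composition of morphisms in $\Ho(\mathcal{C})$.

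Next I would transport the generator along the given triangulated equivalence $\Phi : \Ho(\mathcal{C}) \to \Ho(\mathcal{D})$. Let $Y \in \mathcal{D}$ be a fibrant-cofibrant object representing $\Phi(X)$. Since $\Phi$ is an equivalence of triangulated categories it detects isomorphisms, and since an equivalence of triangulated categories admitting all coproducts necessarily preserves coproducts, $Y$ is a compact generator of $\Ho(\mathcal{D})$. Moreover, for any $A, B \in \mathcal{D}$ we have $[A,B]^\mathcal{D}_* \cong [\Phi^{-1}A, \Phi^{-1}B]^\mathcal{C}_*$, and the right-hand side is rational because $\mathcal{C}$ is; hence $\mathcal{D}$ is a rational stable model category satisfying the hypotheses of Theorem~\ref{thm:rationalmorita}. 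Applying that theorem to $\mathcal{D}$ gives $\mathcal{D} \simeq_Q \leftmod \mathcal{E}_\mathbb{Q}(Y)$ with an isomorphism of graded rings $\h_* \mathcal{E}_\mathbb{Q}(Y) \cong [Y,Y]^\mathcal{D}_*$. Because $\Phi$ is a functor it preserves composition, so it induces an isomorphism of graded rings $[X,X]^\mathcal{C}_* \cong [Y,Y]^\mathcal{D}_*$; combining everything, $\h_* \mathcal{E}_\mathbb{Q}(X) \cong \h_* \mathcal{E}_\mathbb{Q}(Y)$ as graded rings.

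Finally I would invoke the formality hypothesis. Writing $R = \Ho(\mathcal{C})(X,X)$, intrinsic formality of $R$ says that any dga with homology isomorphic to $R$ is quasi-isomorphic to $R$ equipped with trivial differential; applying this to both $\mathcal{E}_\mathbb{Q}(X)$ and $\mathcal{E}_\mathbb{Q}(Y)$ shows these two dgas are connected by a zig-zag of quasi-isomorphisms of dgas. A zig-zag of quasi-isomorphisms of dgas induces a zig-zag of Quillen equivalences between the associated module categories (as in the module-level statements of \cite{SchShi03} used above), so $\leftmod \mathcal{E}_\mathbb{Q}(X) \simeq_Q \leftmod \mathcal{E}_\mathbb{Q}(Y)$. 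Concatenating the three zig-zags gives
\[
\mathcal{C} \simeq_Q \leftmod \mathcal{E}_\mathbb{Q}(X) \simeq_Q \leftmod \mathcal{E}_\mathbb{Q}(Y) \simeq_Q \mathcal{D},
\]
which is the desired conclusion.

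I do not expect a deep obstacle once the Morita-theoretic machinery of the earlier sections is in place. The points requiring genuine, if routine, care are verifying that $\mathcal{D}$ is automatically rational and that $Y = \Phi(X)$ remains a compact generator, and, more importantly, checking that all the identifications of homology are isomorphisms of graded \emph{rings} rather than merely of graded abelian groups. It is this ring-level statement that makes the hypothesis of intrinsic formality — a property of graded algebras — applicable, and it hinges on the enrichment clause of Theorem~\ref{thm:rationalmorita} specialised to one object, together with the functoriality of $\Phi$.
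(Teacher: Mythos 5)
Your proposal is correct and follows essentially the same route as the paper: apply Theorem \ref{thm:rationalmorita} to both $\mathcal{C}$ and $\mathcal{D}$ (after noting $\mathcal{D}$ is rational with compact generator $\Phi(X)$ and that $[X,X]^{\mathcal{C}}_* \cong [\Phi(X),\Phi(X)]^{\mathcal{D}}_*$ as graded rings), then use intrinsic formality to identify both endomorphism dgas with the common homology ring and conclude via the fact that quasi-isomorphisms of dgas induce Quillen equivalences of module categories. Your write-up simply makes explicit some verifications (rationality of $\mathcal{D}$, compactness of $\Phi(X)$, the ring-level nature of the identifications) that the paper treats briefly.
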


We can extend the notion of formality to dgas with many objects, but 
examples of intrinsically formal dgas with many objects are hard to find
in general. We know of two such examples, both from equivariant homotopy theory. 
The first is where the homology of the dga with many objects is 
concentrated in degree zero.
The homology of such a dga with many objects is intrinsically formal 
via a multiple-object version of the above examples. 
Hence the earlier rigidity statements for 
finite and profinite groups are examples of 
where intrinsic formality implies rigidity. 

The second example comes from the work of Greenlees and Shipley 
\cite{GS11}. That paper uses a version of intrinsic formality of 
diagrams of commutative dgas. Note that such a diagram can be written
as a dga with many objects.In that paper, the authors 
show that the homotopy theory of 
rational $S^1$-equivariant spectra is captured by the diagram 
of commutative dgas 
\[
T=\left( \mathbb{Q} \longrightarrow 
\mathcal{E}^{-1} \mathcal{O}_\mathcal{F}
\longleftarrow 
\mathcal{O}_\mathcal{F} \right).
\]
They then prove that this diagram is intrinsically formal 
amongst diagrams of commutative dgas 
with shape $\bullet \to \bullet \leftarrow \bullet$. 
That is, if $R=(A \to B \leftarrow C)$ is a diagram of commutative dgas, whose homology
is isomorphic to $T$, then 
$R$ and $T$ are quasi-isomorphic as diagrams of commutative dgas. 
Commutativity is essential for this proof. 

We remark that our proof that the rational $S^1$-equivariant stable
homotopy category is rigid does not rely on an intrinsic 
formality statement. We would like to use the formality statement of 
Greenlees and Shipley as the basis of a rigidity proof, 
but it is not clear how to realise this diagram of dgas as an artefact 
of the triangulated category. Furthermore, it would be difficult to generalise the 
construction of the numerous different model categories of \cite{GS11} needed to 
relate $\mathbb{T}$-spectra to this diagram of dgas. 

We now give an example of rational equivariant rigidity 
that uses our formality discussion above, but does not rely
on all homology being concentrated in degree zero. 

\begin{ex}
Consider the group $S^1$ and its universal free space 
$\E S^1$. By adding a disjoint basepoint and 
taking the suspension spectrum we obtain a $S^1$-equivariant
spectrum, $\E \mathbb{T}_+$. Then consider the 
function spectrum $\DE \mathbb{T}_+ = F(\E \mathbb{T}_+, \mathbb{S}_\mathbb{Q})$,
where $\mathbb{S}_\mathbb{Q}$ is the rationalised sphere spectrum.
The spectrum $\DE \mathbb{T}_+$ is a commutative ring spectrum
using the diagonal map 
$\E \mathbb{T}_+ \to \E \mathbb{T}_+ \wedge \E \mathbb{T}_+$.
We also have a non-equivariant commutative ring spectrum
$\Dclassify \mathbb{T}_+ = F(\classify \mathbb{T}_+, \mathbb{S}_\mathbb{Q})$,
where we use the non-equivariant rationalised sphere spectrum in this 
construction.

Consider the model category of $\DE \mathbb{T}_+$-modules in rational 
$\mathbb{T}$-spectra and the model category of $\Dclassify \mathbb{T}_+$-modules in non-equivariant rational spectra.
By \cite[Example 11.4]{GS11} we know that these model categories 
are Quillen equivalent. 
These two model categories are important to the study of stable 
$\mathbb{T}$-equivariant phenomena, indeed this 
Quillen equivalence is the prototype for the work of Greenlees
and Shipley on torus-equivariant spectra.

Now we can apply the above discussion of formality. 
The homotopy groups of $\Dclassify \mathbb{T}_+$ 
are given by the ring $\mathbb{Q}[c]$, with $c$ of degree $-2$.
Since this graded commutative ring is intrinsically 
formal, we can conclude that the category
of $\Ho(\leftmod \DE \mathbb{T}_+)$ is rigid. 
In particular, any underlying model category is Quillen 
equivalent to the model category 
of differential graded $\mathbb{Q}[c]$-modules. 
\end{ex}

\bibliography{ERliteratur}
\bibliographystyle{alpha}

\end{document}